\newtheorem{theorem}{Theorem}%[section]
\providecommand{\keywords}[1]
{
  \small	
  \textbf{\textit{Keywords:}} #1
}
\providecommand{\MSC}[2]
{
  \small	
  \textbf{MSC:} #1
}
\newtheorem{lemma}[theorem]{Lemma}%[section]
\newtheorem{remark}[theorem]{Remark}%[section]
\newtheorem{corollary}[theorem]{Corollary}%[section]
\theoremstyle{definition}
\def\M{\mathcal{M}}
\def\J{\mathcal{J}}
\def\N{\mathbb{N}}
\def\G{\mathcal{G}}
\def\inte{\mathrm{int}}
\def\fr{\mathrm{fr}}
\title{On distributional spectrum of piecewise monotonic maps}
\date{December 2020}
\author{Vojt\v ech Pravec, Jan Tesar\v c\' ik}
\begin{document}
\renewcommand{\proofname}{Proof}
\renewcommand{\figurename}{Figure}
\title{On distributional spectrum of piecewise monotonic maps}

\maketitle
\begin{abstract}
We study a certain class of piecewise monotonic maps of interval. 
These maps are strictly monotone on finite interval partition, 
satisfies Markov condition and have generator property. We show 
that for a function from this class distributional chaos is always 
present and we study its basic properties.
 
Main result states that distributional spectrum as well as weak 
spectrum is always finite. This is a generalization of similar 
result for continuous maps on the interval, circle and tree.
 
Example showing that conditions on mentions class can not be 
weakened is presented. 
\end{abstract}
\keywords{Omega-limit set, Distributional chaos, Spectrum of distributional functions, Piecewise monotonic maps.}\\
\MSC  {37B40, 37E05}\\

\section{Introduction}

Distributional chaos and structure of spectrum for continuous maps 
of the one-dimensional spaces as the interval, the circle or tree 
is well known, see \cite{ScSm1,MM1,T1} for details. The class of piecewise monotonic 
not necessary continuous interval maps is a natural generalisation 
of these maps. In papers \cite{H2,H3,H1,HR1,R1,R2} F.~Hofbauer and P.~Raith provided basic tools 
and properties of this class of dynamical systems and showed that 
in general dynamics of map with discontinuity points can have 
significantly different behaviour \cite{HRS}. Therefore we focus on piecewise 
monotonic functions which fulfil Markov conditions  and condition 
of generators. In this case we have obtained that spectrum of distributional 
functions is finite. Spectrum distributional contains minimal incomparable lower 
distributional functions. Just integral of difference between upper and lower distributional 
functions is considered as a measure of chaos \cite{ScSm1,ScSm2}.

We also provide an example which shows that without condition of generators 
piecewise monotonic map can have infinite spectrum. This also can happed when 
we consider continuous map on more general spaces like dendroids~\cite{T1}.  \\

Let $I$ be an interval. For any subset $K\subset I$ let $\inte (K)$ be its \textit{interior}, $\fr (K)$ be its \textit{border} and $K^c$ be its \textit{complement}.

A map $f:I\rightarrow I$ is called \textit{piecewise monotone} if there is a finite colection $\mathcal{J}$ of pairwise disjoint nondegenerate intervals $\{J_1, \dots , J_N  \} $ such that $\left.f\right|_{ \inte (J_i)}$ is continuous and strictly monotone, and $f$ is semicontinuous at $c\in \fr(J_i)$. For any $n\geq 0$, $f^n$ denotes the \textit{n-th iteration} of $f$, it is defined recursively by $f^0=id_I$ and $f^n=f\circ f^{n-1}$. 

Let $C_0(f)$ be the set of endpoints of $J_i \in \J$, $C_0(f)= \{c| c\in \fr (J_i) \}$, and we called this set as a set of critical points. For nonegative integer $n$ let $C_n(f)$ be the set of points which will map to the set of critical points in $n$ iterations, or less, $C_n(f)=\{x\in I| f^i(x)\in C_0(f) \mbox{ for some } 0\leq i \leq n \}$, and let $C(f)$ be the limit case of $C_n(f)$, $C(f)=\{x\in I| f^i(x) \in C_0 \mbox{ for some } i\in \N \}$.
The \textit{trajectory} of a point $x\in I$ is the sequence $\{f^i(x)\}_{i=0}^\infty$ and the \textit{orbit} of a point $x\in X$ is the set $\{x,f(x),f^2(x),\dots\}$. The \textit{$\omega$-limit set} of a point $x\in I$ is a set of accumulation point of trajectory of $x$ and it is denote by $\omega_f(x)$. \\

Based on work of A.~N.Sharkovsky in \cite{Sh2,Sh3}, maximal $\omega$-limit were characterised 
and their property widely studied, see e.g. \cite{Bl95,MM2}. Result in the case of continuous 
maps are based on properties of these $\omega$-limit sets. We adopt this characterisation, but 
we drop assumption of maximality because in the case of discontinuous maps maximal 
$\omega$-limit need not exist \cite{HRS}. If an $\omega$-limit set is finite then it is 
a \textit{cycle}. If it is infinite and does not contain periodic point the is called 
a \textit{solenoid}. The $\omega$-limit set is called a \textit{basic} set if it is infinite 
and contains a periodic point. The last case of $\omega$-limit sets play crucial role in 
distributional chaos.\\

We say that $\omega_f(x)$ is \textit{maximal} if there is no $y\in I$ such that $\omega_f(x)\subsetneq \omega_f(y)$. We denote $\Omega$ as the set of maximal basic $\omega$-limit sets.\\

In \cite{ScSm1}, Schweizer and Smítal have defined the notion of distributional chaos. The definition is based on distributional function of pairs of points. For any $x,y\in I$, a positive integer $n$ and any real $t$, let 
\begin{equation}\label{(1)}
\xi(x,y,t,n)=\#\{i;0\leq i\leq n-1\text{ and }\delta_{xy}(i)<t\}
\end{equation}
where $\#S$ is the cardinality of the set $S$ and $\delta_{xy}(i):=\abs{f^i(x)-f^i(y)}$ is the distance of $i$-th iteration of points $x$ and $y$. Put
\begin{equation}\label{(2)}
F_{xy}^\ast(t)=\limsup_{n\rightarrow\infty}\frac{1}{n}\xi(x,y,t,n),
\end{equation}
\begin{equation}\label{(3)}
F_{xy}(t)=\liminf_{n\rightarrow\infty}\frac{1}{n}\xi(x,y,t,n).
\end{equation}

Function $F_{xy}^\ast$ is called an \textit{upper distributional function} and $F_{xy}$ is called \textit{lower distributional function}. Both $F_{xy}^\ast$ and $F_{x,y}$ are nondecreasing functions with $F_{xy}^\ast(t)=F_{xy}(t)=0$ for every $t<0$ and $F_{xy}^\ast(t)=F_{xy}(t)=1$ for every $t>\abs{I}$.

Let $f$ be a map on interval $I$ and $x,y\in I$. Then the pair $(x,y)$ is \textit{isotectic} (with respect to $f$) if, for every positive integer $n$, the $\omega$-limit sets $\omega_{f^n}(x)$ and $\omega_{f^n}(y)$ are subsets of the same maximal $\omega$-limit set of $f^n$.

Equivalently, the pair $(x,y)$ is isotectic if $\omega$-limit sets $\omega_f(x)$ and $\omega_f(y)$ are subset of the same maximal $\omega$-limit set $\tilde{\omega}$ and if for any $f^{\ast}$-periodic set $J$ such that $J\cup f(J)\cup f^2(J)\cup\dots\cup f^{m-1}(J)\supset\tilde{\omega}$, where $m$ is the period of $J$, there is $j\geq 0$ such that both $f^j(x)$ and $f^j(y)$ belong to $J$. Note that this equivavalent definition was introduced in \cite{ScSm1} and such points are called \textit{synchronous}.

Put $Iso(f)=\{(x,y)\in I^2;(x,y)\text{ is isotectic}\}$ and $D(f)=\{F_{xy}; (x,y)\in Iso(f)\}$. Then the \textit{spectrum of $f$} is the set of minimal elementrs of $D(f)$, denote such set by $\Sigma(f)$.\\

From definition (see (\ref{(2)}), (\ref{(3)})) we get that for every $x,y \in I$ and positive integer $k$ $F_{f^k(x)f^k(y)}(t)=F_{xy}(t)$ and $F^\ast_{f^k(x)f^k(y)}(t)=F^\ast_{xy}(t)$. If we have points $x,y \in C(f)$, then there are positive integers $k,l$ such that $f^k(x)\in C_0(f)$ and $f^l(y)\in C_0(f)$, we take $m$ as maximum of these two positive integers and we know that $f^m(x),f^m(y)\in C_0(f)$, we label $u=f^m(x)\mbox{ and } v=f^m(y)$ and from previous we know that $F_{xy}(t)=F_{uv}(t)$ and $F^\ast_{xy}(t)=F^\ast_{uv}(t)$. In view of this fact in theorems and lemmas it is suficient to concider $C_0(f)$ instead of $C(f)$, when we deal with $F^\ast$, $F$ of pair of points. \\

From now on we additionally suppose that our piecewise monotonic map satisfy \textit{Markov condition} and has \textit{generator}, we denote class of such a map by $\M(I)$. We say that piecewise monotonic map $f$ satisfies \textit{Markov condition} if for any $c\in C_0$ one-sided limits are in $C_0$. The collection of intervals  $\J$ is  \textit{the generator} if for every sequence $\{k_i\}_{i=0}^\infty$ of natural numbers $k_i\in\{1,2,\dots,N\}$ the set $\bigcap_{j=0}^{\infty} f^{-j}(J_{k_j})$ contains at most one point.\\

We end this section with notations of needed properties. First of them is \textit{property of irreducibility}, which describe subset of generator which is in some sense maximal and any member in this subset can access any member in this subset. Second of them replaces classic definition of periodic set, which does not work very well in the case of discontinous map.\\

We say that set $\G \subseteq \J$ has \textit{property of irreducibility} if for all pair of sets $J_i, J_j$ from $\G$ there is a positive integer $n_{i,j}$ such that $f^{n_{i,j}}(\inte (J_i))$ covers $\inte (J_j)$ and if for all pairs of sets $J_i, J_k$, where $J_i$ is from $\G$ and $J_k$ is from $\J \setminus \G$, all iterations of $\inte (J_k)$ never intersects $\inte (J_i)$ or all iterations of $\inte (J_i)$ never intersects $\inte (J_k)$.
Let $A$ be a set with $\inte(A)\neq \emptyset$, we say that $A$ is a \textit{$f^{\ast}$-periodic set} if there is a $m\geq 1$ with $\inte(f^m(A))=\inte (A)$.
For $\mathcal{L} \subseteq \J$  we define the \textit{set of all admissible sequences} $S(f,\mathcal{L})= \{ \{\alpha_i\}| J_{\alpha_i}\in \mathcal{L} \mbox{ and } f(J_{\alpha_i})\cap J_{\alpha_{i+1}}\neq \emptyset \}$.\\

The following theorem is our main result. Its proof can be found in section 4.

\begin{theorem}
Let $f \in \M(I)$, then both the Spectrum $\Sigma(f) $ and the Weak Spectrum $\Sigma_w(f)$ are nonempty and finite.
\end{theorem}

\section{Properties of $\omega$-limit set}

\begin{lemma}\label{IL}
Let $f\in PMM(I)$ with Markov condition and let $u,v \in I$. Let $\{ U_i \}_{i=1}^{\infty} $, $\{ V_i \}_{i=1}^{\infty}$ be compact intervals with 
$\lim_{i\rightarrow \infty} U_i = u$, $\lim_{i\rightarrow \infty} V_i = v$, and such that for all positive integers $i,j $ there are positive integers $u(i,j)$ and $v(i,j)$ with $  V_j \subset f^{u(i,j)}(U_i) $ and $ U_j \subset f^{v(i,j)}(V_i) $. Then $\{ u,v \} \subset \omega(y)$ for some $y\in I$.
\end{lemma}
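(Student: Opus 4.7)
The plan is to produce a point $y$ whose $f$-orbit alternately enters arbitrarily small neighbourhoods of $u$ and $v$. I would build a nested family of compact intervals $W_0\supset W_1\supset\cdots$ and increasing times $0=t_0<t_1<\cdots$ such that $f^{t_{2k}}(W_{2k})\subset U_{i_k}$ and $f^{t_{2k+1}}(W_{2k+1})\subset V_{j_k}$, with index sequences $i_k,j_k\to\infty$. Then any $y\in\bigcap_k W_k$ (nonempty by compactness) satisfies $f^{t_{2k}}(y)\to u$ and $f^{t_{2k+1}}(y)\to v$, yielding $\{u,v\}\subset\omega(y)$.

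The induction maintains the invariant that $f^{t_k}$ is continuous and strictly monotone on $W_k$ and that $f^{t_k}(W_k)$ is a compact interval containing the relevant $U_{i_k}$ (at even $k$) or $V_{j_k}$ (at odd $k$) in its interior. For the ``$U$-to-$V$'' step, choose $j_{k+1}$ large and set $m=u(i_k,j_{k+1})$; the hypothesis gives $V_{j_{k+1}}\subset f^m(U_{i_k})\subset f^{t_k+m}(W_k)$. Because $f$ is piecewise monotone and the Markov condition keeps critical orbits under control, $f^{t_k+m}|_{W_k}$ splits into finitely many monotone continuous branches whose images cover $V_{j_{k+1}}$; one branch's image meets $\inte(V_{j_{k+1}})$ in an open subinterval around $v$, and pulling a compact subinterval of it back along that branch yields $W_{k+1}\subset W_k$ on which $f^{t_{k+1}}$, with $t_{k+1}=t_k+m$, is continuous and monotone, and whose image is a compact subinterval of $V_{j_{k+1}}$ containing $v$ in its interior. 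Picking a still larger index $j_{k+1}'$ with $V_{j_{k+1}'}\subset f^{t_{k+1}}(W_{k+1})$ restores the invariant; the ``$V$-to-$U$'' step is symmetric via the map $v(\cdot,\cdot)$.

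The main obstacle is the direction of the covering: the hypothesis supplies $V_j\subset f^{u(i,j)}(U_i)$, while the inductive step needs $f^{t_k}(W_k)\supset U_{i_k}$, not the reverse. To reconcile this, one assumes (after a harmless enlargement of the given intervals, which preserves the covering hypothesis) that $u\in\inte(U_i)$ and $v\in\inte(V_j)$ for all $i,j$. The careful pullback above keeps $u$, resp.\ $v$, in the interior of the new image, and the Hausdorff convergence $U_i\to u$, $V_j\to v$ then provides a later member of each sequence that fits inside the new image, restoring the ``$\supset$'' form needed for the next iteration. The indices chosen in this way are forced to tend to infinity, so the target diameters shrink to zero and the desired convergence of $f^{t_k}(y)$ to $u$ or $v$ follows.
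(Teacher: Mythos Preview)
Your nested-interval strategy is the same as the paper's, but your choice of invariant manufactures the very obstacle you flag, and the proposed fix is not sound. You keep $f^{t_k}(W_k)\supset U_{i_k}$ (resp.\ $\supset V_{j_k}$) and then, after a pullback, must locate a later $V_{j'}$ inside the new, smaller image; for that you want $v\in\inte(V_j)$ and claim this can be arranged by a ``harmless enlargement'' of the $U_i,V_j$. But enlarging $V_j$ can destroy the hypothesis $V_j\subset f^{u(i,j)}(U_i)$ (the right side is unchanged while the left grows), and symmetrically enlarging $U_j$ can destroy $U_j\subset f^{v(i,j)}(V_i)$. There is no evident way to enlarge both families and keep both covering relations, so the step ``which preserves the covering hypothesis'' is unjustified.

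The paper sidesteps the whole difficulty by a sharper invariant: keep $f^{n(k)}(J_k)$ \emph{equal} to the current target interval, not merely containing it. Set $J_1=U_1$ and $n(1)=u(1,2)$; since $f^{n(1)}(J_1)=f^{u(1,2)}(U_1)\supset V_2$, choose $J_2\subset J_1$ with $f^{n(1)}(J_2)=V_2$. Then $f^{n(1)+v(2,3)}(J_2)=f^{v(2,3)}(V_2)\supset U_3$, so take $J_3\subset J_2$ with $f^{n(2)}(J_3)=U_3$, and continue alternately. Because the image at each stage \emph{is} one of the given $U_i$ or $V_j$, the next covering relation from the hypothesis applies directly---there is no ``direction'' problem, no branch bookkeeping, no enlargement, and no need for $u,v$ to lie in the interiors. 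Any $y\in\bigcap_k J_k$ then satisfies $f^{n(2k-1)}(y)\in V_{2k}\to v$ and $f^{n(2k)}(y)\in U_{2k+1}\to u$, whence $\{u,v\}\subset\omega(y)$. Reworking your induction with this ``equality'' invariant removes the obstacle entirely.
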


\begin{proof}
Define a decreasing sequence $\{J_i \}_{i=1}^{\infty}$ of compact intervals and increasing sequence $\{ n(i) \}_{i=1}^{\infty}$ of positive integers as follows: $J_1 = U_1$ and $n(1)=u(1,2)$. Then $f^{n(1)}(J_1) \supset V_2 $, choose $J_2 \subset J_1$ such that $f^{n(1)}(J_2)=V_2$. Take $n(2)=n(1)+v(2,3)$. Then $f^{n(2)}(J_2)= f^{v(2,3)}(V_2) \supset U_3$ and there is $J_3 \subset J_2$ such that $f^{n(2)}(J_3)=U_3$. Then there are $J_4$ and $n(3)$ such that $f^{n(3)}(J_4)=V_4$, etc. Let $y\in \bigcap_{i=1}^{\infty} J_i$. Since the trajectory of $y$ visits every neighborhood of $u$ and every neighborhood of $v$, the results follows.
\end{proof}

\begin{lemma}\label{L4}
Let $f\in \M(I)$ and let $\J$ be its generator. Let there is a sequence of integers $\{ \beta_i \}_{i=0}^{\infty} $ such that $\bigcap_{i=0}^{\infty}f^{-i}(J_{\beta_i})\neq \emptyset $, then\\
 $ \bigcap_{i=0}^{\infty}\overline{f^{-i}(J_{\beta_i})} \setminus \bigcap_{i=0}^{\infty}f^{-i}(J_{\beta_i}) \subset C(f)   $ .
\end{lemma}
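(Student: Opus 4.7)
My plan is to prove the contrapositive: if $x$ lies in $\bigcap_{i=0}^\infty \overline{f^{-i}(J_{\beta_i})}$ and $x \notin C(f)$, then I will show $x$ must belong to $\bigcap_{i=0}^\infty f^{-i}(J_{\beta_i})$, which rules out membership in the difference.

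First, I would unpack the assumption $x \notin C(f)$: it says $f^j(x) \notin C_0(f)$ for every $j \geq 0$. Since $C_0(f)$ is precisely the collection of boundary points of the pieces $J_l$, each iterate $f^j(x)$ lies in the interior of some piece, where $f$ is genuinely continuous. A routine induction on $n$ then shows that every iterate $f^n$ is continuous at this particular point $x$.

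With this continuity established, I would fix $n \geq 0$ and use $x \in \overline{f^{-n}(J_{\beta_n})}$ to choose a sequence $x_k \to x$ with $f^n(x_k) \in J_{\beta_n}$. Continuity of $f^n$ at $x$ gives $f^n(x) = \lim_k f^n(x_k) \in \overline{J_{\beta_n}}$. Since $\overline{J_{\beta_n}} \setminus J_{\beta_n} \subseteq \fr(J_{\beta_n}) \subseteq C_0(f)$ and $f^n(x) \notin C_0(f)$, the only option is $f^n(x) \in J_{\beta_n}$. As $n$ was arbitrary, $x \in \bigcap_{i=0}^\infty f^{-i}(J_{\beta_i})$, giving the desired contradiction.

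The only step I expect to require care is the inductive continuity argument. The map $f$ is only semicontinuous at points of $C_0(f)$, so compositions can genuinely fail to be continuous at points whose orbit meets a critical point; the whole argument hinges on the fact that $x \notin C(f)$ prevents this. I would also note that the generator property of $\J$, the Markov condition, and even the nonemptiness of the intersection in the hypothesis play no role in this particular lemma, so the statement is really a structural consequence of piecewise monotonicity together with the way $C(f)$ is defined.
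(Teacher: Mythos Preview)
Your proof is correct and follows essentially the same idea as the paper's: the paper observes directly that $\overline{f^{-i}(J_{\beta_i})}\setminus f^{-i}(J_{\beta_i})\subset C(f)$ for each fixed $i$ and deduces the inclusion from that, while you argue the equivalent contrapositive via continuity of the iterates $f^n$ at points outside $C(f)$. Both arguments rest on the same fact, namely that the only points the closure adds to a preimage $f^{-i}(J_{\beta_i})$ are those whose $i$-th iterate lands on an endpoint of $J_{\beta_i}$, hence in $C_0(f)$.
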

\begin{proof}
Note that $ \overline{f^{-i}(J_\alpha)}=f^{-i}(J_\alpha) \cup C^{i,\alpha}  $, where $C^{i, \alpha} = \{ c_1, c_2| f^i(c_1), f^i(c_2) \in \fr (J_{\alpha})  \} \subset C(f)$. 
Therefore it is easy to see that $\bigcap_{i=0}^{\infty}\overline{f^{-i}(J_{\beta_i})} \setminus \bigcap_{i=0}^{\infty}f^{-i}(J_{\beta_i}) $ is either empty, or is equal to $ \{ c \} $, where $c\in C(f)$.
\end{proof}

The following lemma is probably the most important lemma of this section, it show us a strong connection between maximal $\omega$-limit set and subset of generator
with property of irreducibility and we use this lemma to prove lemmas which follows.

\begin{lemma}\label{FG}
Let $f\in \M(I)$ and let $\J$ be its generator. There is a $\G \subset \J$ such that $\G$ has property of irreducibility if and only if there 
is $x\in I $ with maximal basic $\omega$-limit set $\omega(x)\subset \bigcup_{J_i \in \G} \overline{J_i}$.
\end{lemma}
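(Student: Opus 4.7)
The plan is to prove the biconditional by handling each direction separately, using Lemma \ref{IL} and Lemma \ref{L4} as the main tools.

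For the direction $(\Rightarrow)$, assume $\G$ has the property of irreducibility; I aim to produce $y \in I$ whose $\omega$-limit set is basic, maximal, and contained in $\bigcup_{J_i \in \G} \overline{J_i}$. First, enumerate all ordered pairs from $\G \times \G$ as a sequence $(J_{a_k}, J_{b_k})_{k \ge 1}$ in which every pair recurs infinitely often. By condition (a) of irreducibility, $f^{n_{a_k,b_k}}(\inte J_{a_k}) \supset \inte J_{b_k}$, so by piecewise monotonicity we may pick a compact $W_k \subset \inte J_{a_k}$ with $f^{n_{a_k,b_k}}(W_k) \subset \overline{J_{b_k}}$ whose symbolic itinerary lies in $S(f,\G)$. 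The key observation is that a non-$\G$ interval simultaneously reachable from $\G$ and from which $\G$ is reachable cannot exist: composing such access data with condition (a) inside $\G$ would contradict condition (b). This enables the $\G$-internal symbolic routing. Iterated pullback of the $W_k$'s yields nested compact intervals $U_1 \supset U_2 \supset \dots$; any $y \in \bigcap_k U_k$ (nonempty by compactness) has an orbit visiting each $J_i \in \G$ infinitely often and staying in $\bigcup_{J_i \in \G} \overline{J_i}$.

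It remains to verify that $\omega(y)$ is basic and maximal. Infiniteness and the presence of a periodic point follow from condition (a) with $i=j$: $f^{n_{i,i}}(\inte J_i) \supset \inte J_i$ produces a periodic point of $f^{n_{i,i}}$ in $\overline{J_i}$ by the intermediate value theorem, and the transitive Markov-generator structure places it in $\omega(y)$. For maximality, suppose $\omega(y) \subsetneq \omega(z)$; then $\omega(z)$ meets some $\inte J_k$ with $J_k \notin \G$, and applying Lemma \ref{IL} to accumulation points $u \in \overline{J_k}$ and $v \in \overline{J_i}$ (for some $J_i \in \G$) of $\omega(z)$ forces iterations connecting $\inte J_k$ and $\inte J_i$ in both directions, contradicting condition (b).

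For the direction $(\Leftarrow)$, given $x \in I$ with maximal basic $\omega(x) \subset \bigcup_{J_i \in \G} \overline{J_i}$, I pass to the sub-collection $\G' = \{J_i \in \G : \omega(x) \cap \inte J_i \neq \emptyset\}$. By Lemma \ref{L4}, the remaining points of $\omega(x)$ lie in $C(f)$, so $\omega(x) \subset \bigcup_{J_i \in \G'}\overline{J_i}$ still holds. For condition (a) on $\G'$, pick $p_i \in \omega(x) \cap \inte J_i$ and $p_j \in \omega(x) \cap \inte J_j$; because $\omega(x)$ is basic (so periodic points are dense in it) and the orbit of $x$ passes through arbitrary neighborhoods of $p_i$ and then of $p_j$, piecewise monotonicity extracts integers $n_{i,j}$ with $f^{n_{i,j}}(\inte J_i) \supset \inte J_j$. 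For condition (b), if both iterates $\inte J_k \to \inte J_i$ and $\inte J_i \to \inte J_k$ were realised for some $J_k \notin \G'$ and $J_i \in \G'$, Lemma \ref{IL} would furnish $z \in I$ with $\omega(z) \supsetneq \omega(x)$, contradicting maximality.

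The main obstacle lies in $(\Rightarrow)$: ensuring the orbit of the constructed $y$ stays inside $\bigcup_{J_i \in \G}\overline{J_i}$ requires the combined use of conditions (a) and (b) to rule out non-$\G$ intervals that bridge into and out of $\G$, and proving maximality without any a priori catalogue of $\omega$-limit sets depends on using Lemma \ref{IL} against condition (b) to forbid strictly larger limit sets.
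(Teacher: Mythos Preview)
Your overall strategy matches the paper's: build $x$ via symbolic dynamics in the forward direction, and verify the two irreducibility conditions via the orbit of $x$ and maximality in the backward direction. However, your invocations of Lemma~\ref{IL} are misplaced in both directions. In the forward maximality step you have $u,v\in\omega(z)$ and want to conclude that iterates connect $\inte J_k$ and $\inte J_i$ in both directions; that is the \emph{converse} of Lemma~\ref{IL}, and it follows directly from the definition of $\omega$-limit set together with the Markov property (if some $f^n$ carries a point of $\inte J_k$ into $\inte J_i$, then $f^n(\inte J_k)\supset\inte J_i$ because endpoints go to $C_0$). Lemma~\ref{IL} itself says nothing here. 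In the backward direction, for condition (b), Lemma~\ref{IL} only produces a point $z$ with $\{u,v\}\subset\omega(z)$; it does \emph{not} give $\omega(z)\supsetneq\omega(x)$. The paper closes this by applying the already-proved forward direction to the enlarged family $\G'\cup\{J_k\}$, obtaining a maximal basic $\omega(z)$ that contains every $\G'$-confined orbit (hence $\omega(x)$) and also meets $\inte J_k$.

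Two further points. Your appeal to ``periodic points are dense in $\omega(x)$'' is circular: that is Lemma~\ref{PerP}, whose proof uses the present lemma. Fortunately you do not actually need it---the orbit of $x$ visiting neighbourhoods of $p_i$ and then $p_j$, plus the Markov covering property, already yields $f^{n_{i,j}}(\inte J_i)\supset\inte J_j$. Finally, by passing to $\G'\subset\G$ you prove irreducibility for a possibly smaller collection than the stated $\G$; the paper argues for $\G$ itself (implicitly taking $\G$ to be exactly the intervals meeting $\omega(x)$), so you should either match that convention or note explicitly that the lemma, as used later, only needs the existence of \emph{some} irreducible subfamily containing $\omega(x)$.
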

\begin{proof}
Let $\G \subset \J$ have property of irreducibility. For showing that there is a $x\in I$ with maximal basic $\omega$-limit set $\omega(x)\subset \bigcup_{J_i \in \G} \overline{J_i}$ we follow the well known procedure from Symbolic dynamic (for details see Theorem 2 in \cite{H1}), where we construct infinite sequence of indices of intervals such that any finite admissible sequence from $S(f,\G)$ will be there and this constructed sequence $\{\beta_i\}_{i=0}^{\infty}$ is as well from $S(f,\G)$.\\
By Lemma~\ref{L4} and properties of generator we can see that the intersection $\bigcap_{i=0}^{\infty}f^{-i}(J_{\beta_i})$ contains 
exactly one point $x$. Trajectory of this point $x$ visits any neighborhood of any point $y$ for which his iterations will remain in $\G$.\\
To see $\omega(x)$ is maximal assume that there is some other maximal $\omega(z)$ on irreducible $\G$, then any point $y\in \bigcup_{J_i \in \G}\inte(J_i) $ for which his iterations will remain in $\G$ has to be in $\omega(z)$ and $\omega(x)\subset \omega(z)$. 
If there is some a point $p\in \omega(z)\setminus \omega(x)$, then $p\in \inte(J_k)$ where $J_k \in \J \setminus \G$ which is contradiction with irreducibility of $\G$.

Conversely, let there is $x\in \bigcup_{J_i \in \G} J_i $ with maximal basic $\omega$-limit set $\omega(x)$.\\
If there are some $J_k,J_l \in \G$ with $f^n(\inte(J_k))\cap \inte(J_l) =\emptyset$ for any positive integer $n$, then it would be impossible for iteration of the point $x$ to be infinitely many times in any neighbourhood of any point from intervals $J_k$ and $J_l$, therefore we get that for any two $J_k, J_l$ from $\G$ there has to be some positive integer $n$ with $f^n(\inte(J_k))\cap \inte(J_l) \neq \emptyset$. \\
If there is $J_k \in \J \setminus \G$ with positive integers $n,m$ such that $f^n(\inte(J_k))\cap \inte(J_i) \neq \emptyset$ and 
$f^m(\inte(J_j))\cap \inte(J_k)\neq \emptyset $ for some $J_i, J_j \in \G$, then $G\cup \{ J_k \}$ is a irreducible subset of generator and we can find $z$ such that $\omega(z)$ is maximal basic set with $\omega(z)\cap \inte(J_k)\neq \emptyset$ and $\omega(x)\subset \omega(z)$, wich contradict maximality of $\omega(x)$.
\end{proof}

\begin{remark}
If $f(\inte(J_j))\subset \bigcup_{J_i \in \G} \overline{J_i} $ hold for any $J_j \in \G$, then there is $x\in I$ such that $\omega(x)=\bigcup_{J_i \in \G} \overline{J_i}$.
\end{remark}

\begin{lemma}\label{BpG}
For any interval $J_p$ from the generator $\J$ there is at most one $\omega \in \Omega$ with $\inte(J_p)\cap \omega \neq \emptyset$.
\end{lemma}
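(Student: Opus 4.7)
My plan is a proof by contradiction. Suppose $\omega_1, \omega_2 \in \Omega$ are distinct and both meet $\inte(J_p)$; I will construct a maximal basic $\omega$-limit set strictly containing $\omega_1$, contradicting its maximality. Lemma~\ref{FG} attaches to each $\omega_i$ an irreducible subset $\G_i \subseteq \J$ with $\omega_i \subseteq \bigcup_{J_j \in \G_i} \overline{J_j}$. The strategy is then to show that $\G := \G_1 \cup \G_2$ is again irreducible and to feed it back through Lemma~\ref{FG}.

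First I would record the disjointness observation that, since the pieces of $\J$ are pairwise disjoint and the endpoints of any $J_j$ are critical points not lying in the interior of any other piece, $\overline{J_j} \cap \inte(J_k) = \emptyset$ whenever $j \neq k$. Consequently, $\omega_i \cap \inte(J_p) \neq \emptyset$ forces $J_p \in \G_i$, placing $J_p \in \G_1 \cap \G_2$. For the transitive-covering clause of irreducibility of $\G$, any mixed pair $J_i \in \G_1$, $J_j \in \G_2$ (or the reverse) is handled by bridging through $J_p$: pick $n_1$ with $f^{n_1}(\inte(J_i)) \supseteq \inte(J_p)$ by irreducibility of $\G_1$ and $n_2$ with $f^{n_2}(\inte(J_p)) \supseteq \inte(J_j)$ by irreducibility of $\G_2$, so $f^{n_1+n_2}(\inte(J_i)) \supseteq \inte(J_j)$. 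For the "never intersects" clause, any $J_k \in \J \setminus \G$ is outside both $\G_1$ and $\G_2$, so for each $J_i \in \G$ the required alternative for the pair $(J_i, J_k)$ is inherited verbatim from the irreducibility of whichever of $\G_1, \G_2$ contains $J_i$.

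Having irreducibility of $\G$, Lemma~\ref{FG} delivers a point $x$ with $\omega(x)$ maximal basic and $\omega(x) \subseteq \bigcup_{J_j \in \G} \overline{J_j}$. The last step is to argue $\omega(x) \supsetneq \omega_1$. If $\G_1 = \G_2$, the uniqueness implicit in the proof of Lemma~\ref{FG} --- a single irreducible $\G$ admits at most one maximal basic $\omega$-limit set inside its union of closures, because the constructed $\omega(x)$ absorbs any other such set --- would force $\omega_1 = \omega_2$, so $\G_2 \setminus \G_1 \neq \emptyset$. Pick $J_j \in \G_2 \setminus \G_1$: the construction in Lemma~\ref{FG} realizes every admissible word from $S(f,\G)$ along the itinerary of $x$, so every point of $\inte(J_j)$ whose orbit stays in $\G$ lies in $\omega(x)$, giving $\omega(x) \cap \inte(J_j) \neq \emptyset$, whereas $\omega_1 \cap \inte(J_j) = \emptyset$ by the disjointness observation. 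The strict inclusion $\omega_1 \subsetneq \omega(x)$ contradicts the maximality of $\omega_1$.

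The main obstacle I expect is the seemingly innocent disjointness fact $\overline{J_j} \cap \inte(J_k) = \emptyset$ for $j \neq k$: it is the load-bearing step, because it is what converts "$\omega_i$ meets $\inte(J_p)$" into "$J_p \in \G_i$" and also provides the strict enlargement at the end. Once this is pinned down, the irreducibility of $\G_1 \cup \G_2$ and the contradiction with maximality of $\omega_1$ follow routinely from Lemma~\ref{FG}.
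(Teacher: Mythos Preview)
Your argument is correct and follows essentially the same route as the paper: obtain irreducible $\G_1,\G_2\subseteq\J$ via Lemma~\ref{FG}, note $J_p\in\G_1\cap\G_2$, observe $\G_1\cup\G_2$ is again irreducible, and feed it back into Lemma~\ref{FG} to contradict maximality. You are more explicit than the paper in justifying $J_p\in\G_i$, in checking both clauses of irreducibility for $\G_1\cup\G_2$, and in securing the \emph{strict} inclusion $\omega_1\subsetneq\omega(x)$ (the paper simply asserts that $\G\cup\mathcal{H}$ is irreducible and that the resulting $\omega(z)$ contradicts maximality).
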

\begin{proof}
Assume that $J_p\in \J$ and $\omega(x)$ is maximal basic $\omega$-limit set with $\omega(x)\cap \inte(J_p)\neq \emptyset$. From Lemma~\ref{FG} we can see that $J_p \in \G$ where $\G\subset \J$ has property of irreducibility. 
If there is some other maximal basic $\omega$-limit set $\omega(y)\neq \omega(x)$ with $\omega(y)\cap \inte(J_p)\neq \emptyset$, then by Lemma~\ref{FG} there is $\mathcal{H} \subset \J $ with property of irreducibility such that $\omega(y) \subset \bigcup_{J_j \in \mathcal{H}} \overline{J_j}$ and $J_p\in \mathcal{H}$. Therefore $\G \cup \mathcal{H}$ forms set with property of irreducibility and by Lemma~\ref{FG} there is some maximal basic $\omega$-limit set $\omega(z)$ such that $\omega(x)\subset \omega(z)$ which contradicts maximality of $\omega(x)$.
\end{proof}

Previous lemma tells us that the interior of a set from generator can intersect only with one maximal basic set, which immediately gives us a finite number of maximal basic sets and also that the intersection of two different maximal basic sets is always finite and under $C_0(f)$.

\begin{lemma}\label{PerP}
 Let $f\in \M(I)$ and $\omega \in \Omega$, then periodic points of $\omega$ form a dense subset of $\omega$.
 \end{lemma}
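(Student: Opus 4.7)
The plan is to combine the symbolic coding coming from Lemma~\ref{FG} with the shrinking of generator cylinders established in Lemma~\ref{L4}. By Lemma~\ref{FG} there is some $\G\subset\J$ with the property of irreducibility such that $\omega\subset\bigcup_{J_i\in\G}\overline{J_i}$. Fix $p\in\omega$ and an open neighbourhood $U$ of $p$; the aim is to exhibit a periodic point of $f$ lying in $\omega\cap U$.

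First I would attach to $p$ a $\G$-itinerary $\{\beta_i\}_{i=0}^{\infty}\in S(f,\G)$ with $f^i(p)\in\overline{J_{\beta_i}}$. Lemma~\ref{L4} and the generator condition imply that the decreasing closed cylinders $K_N=\bigcap_{i=0}^{N}\overline{f^{-i}(J_{\beta_i})}$ shrink in the limit to a set that differs from $\{p\}$ by at most finitely many points of $C(f)$, so after shrinking $U$ to avoid those exceptional critical points one has $K_N\subset U$ for every sufficiently large $N$.

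Next I would use irreducibility of $\G$ applied to the pair $(J_{\beta_N},J_{\beta_0})$ to find $m\ge 1$ and an admissible word $\alpha_1,\dots,\alpha_{m-1}$ in $\G$ such that $(\beta_0,\dots,\beta_N,\alpha_1,\dots,\alpha_{m-1})$ is admissible and $f^m(\inte(J_{\alpha_{m-1}}))\supset \inte(J_{\beta_0})$. Periodically extending this word gives a periodic admissible sequence $\gamma$ of period $P=N+m$ in $S(f,\G)$, and by Lemma~\ref{L4} together with the generator property the intersection $\bigcap_{i\ge 0}\overline{f^{-i}(J_{\gamma_i})}$ reduces to a single point $q$ modulo at most one critical-point ambiguity; the periodicity of $\gamma$ then forces $f^P(q)=q$. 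Because $q$ and $p$ agree on their first $N$ itinerary symbols, $q\in K_N\subset U$.

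To finish I would argue $q\in\omega$ by using the symbolic construction from the proof of Lemma~\ref{FG} to build a point $y$ whose $\G$-itinerary contains every finite admissible word in $S(f,\G)$, and in particular the periodic word defining $q$, as a subword occurring infinitely often; the resulting $\omega(y)$ is a maximal basic $\omega$-limit set meeting the interior of every $J_i\in\G$ and containing the orbit of $q$, so Lemma~\ref{BpG} forces $\omega=\omega(y)$ and hence $q\in\omega$. The main obstacle I anticipate is the handling of critical points: the itinerary of $p$ is ambiguous on $C(f)$, the cylinders $f^{-i}(J_{\beta_i})$ are not closed, and Lemma~\ref{L4} permits one spurious boundary point per cylinder, so the itinerary of $p$ and the connecting word must be chosen carefully to ensure $q$ is a genuine interior point rather than a boundary artifact; the degenerate case $p\in C(f)$ is then absorbed by first proving density at the non-critical points of $\omega$ and taking closures.
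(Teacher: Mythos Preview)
Your proposal is correct and follows essentially the same route as the paper: pass to the irreducible $\G$ given by Lemma~\ref{FG}, capture the target point inside a finite cylinder $\bigcap_{i=0}^{N}f^{-i}(J_{\alpha_i})$, use irreducibility of $\G$ to extend the initial word to a periodic admissible sequence in $S(f,\G)$, and invoke the generator property to obtain a periodic point in the cylinder that lies in $\omega$. The paper's argument is terser (it simply cites Lemma~\ref{FG} for membership in $\omega$ rather than going through Lemma~\ref{BpG}, and does not discuss the critical-point subtleties you flag), but the underlying mechanism is identical.
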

\begin{proof}
By Lemma~\ref{FG} there is $\G\subset \J$ such that $\G$ has property of irreducibility and $\omega\subset\bigcup_{J_i\in\G}\overline{J_i}$. Now, let $x\in\omega$ and let $U$ be its arbitrary neigborhood. Then, there is $V\subset U$ such that $V=\bigcap_{i=0}^jf^{-i}(J_{\alpha_i})$, where $J_{\alpha_i}\in\G$. Since $\G$ is irreducible, there is $l\geq j$ such that the finite sequence $\alpha_1,\alpha_2\dots,\alpha_j$ can be extended into infinite periodic sequence  $\alpha_1,\alpha_2\dots,\alpha_j,\dots,\alpha_l,\alpha_1,\dots$ in such way that $\bigcap_{i=0}^\infty f^{-i}(J_{\alpha_i})=y$. It is easy to see that $y$ is a periodic point and by Lemma~\ref{FG} $y\in \omega$ and since $x$ and $U$ was chosen arbitrary the lemma follows.
\end{proof}

\begin{lemma}\label{PerfB}
Let $f\in \M(I)$ and $\J=\{J_1, J_2, \dots, J_n\}$ be its generator. Let $\omega \in \Omega$, then $\omega$ is perfect set.  
\end{lemma}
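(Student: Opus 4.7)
The plan is to show that $\omega$ has no isolated points; since $\omega$ is automatically closed (as an $\omega$-limit set), this will yield perfection. For arbitrary $x\in\omega$ and arbitrary open neighbourhood $U$ of $x$, the aim is to exhibit a periodic point $y\in(\omega\cap U)\setminus\{x\}$.

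First I would invoke Lemma~\ref{FG} to fix an irreducible $\G\subset\J$ with $\omega\subset\bigcup_{J_i\in\G}\overline{J_i}$, together with an admissible sequence $\alpha_0,\alpha_1,\dots\in S(f,\G)$ coding $x$, in the sense that $x\in\bigcap_{i\ge 0}\overline{f^{-i}(J_{\alpha_i})}$. The generator property combined with Lemma~\ref{L4} shows that the decreasing cylinders $V_j=\bigcap_{i=0}^{j}\overline{f^{-i}(J_{\alpha_i})}$ collapse onto $\{x\}$ up to at most one additional point of $C(f)$, so after shrinking $U$ if needed to avoid that accidental critical point, I may assume $V_j\subset U$ for all sufficiently large $j$. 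I would then mimic the construction inside the proof of Lemma~\ref{PerP}: by irreducibility of $\G$ I append to $\alpha_0,\dots,\alpha_j$ a finite admissible return path from $J_{\alpha_j}$ back to $J_{\alpha_0}$ and repeat the resulting block to obtain an infinite periodic admissible sequence $\beta$. Setting $y=\bigcap_{i\ge 0}f^{-i}(J_{\beta_i})$, I obtain a periodic point with $y\in V_j\subset U$, and Lemma~\ref{FG} places $y$ in $\omega$.

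The step I expect to require the most care is arranging $y\neq x$. If $x$ is not periodic this is automatic, since $y$ carries a periodic coding that $x$'s coding cannot match and the generator property then forces $y\neq x$. If $x$ itself is periodic of some period $p$, irreducibility of $\G$ provides return paths from $J_{\alpha_j}$ to $J_{\alpha_0}$ of arbitrarily large length, so I can choose the total period of $\beta$ not to be a multiple of $p$, again producing different codings and hence $y\neq x$. What honestly needs a brief justification here is the existence of at least two distinct periodic admissible sequences in $\G$, and the cleanest way to see this is to note that the alternative—only one periodic admissible sequence in $\G$—would, via Lemma~\ref{FG}, force $\omega$ to be finite and contradict $\omega\in\Omega$ being basic. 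Producing $y\in(\omega\cap U)\setminus\{x\}$ in both cases shows $x$ is not isolated in $\omega$; since $x$ was arbitrary, $\omega$ is perfect.
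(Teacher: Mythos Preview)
Your argument is correct, but it diverges from the paper's in the periodic case. For non-periodic $x$ both proofs amount to invoking the density of periodic points established in Lemma~\ref{PerP}; you essentially reprove that lemma inside your argument rather than quoting it. The real difference is what happens when $x$ is itself periodic. The paper does not build a second periodic point symbolically; instead it observes that an isolated periodic point of an infinite $\omega$-limit set would have to be an \emph{attracting} periodic point, and that the existence of such an attractor is incompatible with the generator and Markov hypotheses. Your route through coding---choosing the period of the new cycle so that the resulting itinerary cannot coincide with that of $x$---works too, but it costs you the justification you yourself flag (that $\G$ supports at least two distinct periodic itineraries, and that the return-path lengths are flexible enough to avoid the period of $x$). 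The paper's attractor argument sidesteps all of that combinatorics in one line, at the price of appealing to a dynamical fact about isolated periodic points in $\omega$-limit sets; your approach is more self-contained symbolically but longer. Either way the conclusion stands.
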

\begin{proof}
By definition $\omega$ is a closed set. Now, assume that $x\in\omega$ is an isolated point. If $x $ would be a nonperiodic point then we get a contradiction with Lemma~\ref{PerP}. Therefore we may assume that $x$ is a periodic point and since $\omega$ is infinite $x$ is an attracting periodic point, but that is contradiction with assumption of existence of generator and Markov condition.
\end{proof}

\begin{lemma}\label{MaxBaz}
Let $f\in \M(I)$, let $x\in I$ such that $\omega(x) \subset \tilde{\omega}$, where $\tilde{\omega}\in \Omega $. Then there is a positive integer $N$ such that $f^n(x) \in \tilde{\omega}$ for any $n \geq N$.
\end{lemma}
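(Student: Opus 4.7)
The plan is to apply Lemma~\ref{FG} to obtain $\G\subset\J$ having the property of irreducibility such that $\tilde{\omega}\subset\bigcup_{J_i\in\G}\overline{J_i}$, and then to track the orbit of $x$ relative to the splitting $\J=\G\cup(\J\setminus\G)$. The aim is to show that the orbit eventually enters and never leaves $\bigcup_{J_i\in\G}\inte(J_i)$; membership in $\tilde{\omega}$ will then follow from the symbolic-dynamics argument already employed in the proof of Lemma~\ref{FG}.

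First I would split on whether $\omega(x)$ is finite or infinite. If $\omega(x)$ is finite, it is a cycle, and the absence of attracting periodic orbits under Markov and generator (the observation used in the proof of Lemma~\ref{PerfB}) forces $x$ to be eventually periodic and to land exactly on that cycle, which lies in $\tilde{\omega}$. If $\omega(x)$ is infinite, then $\tilde{\omega}$ being perfect (Lemma~\ref{PerfB}) and $C_0(f)$ being finite imply that $\omega(x)$ contains some $p\in\inte(J_{i_0})$ with $J_{i_0}\in\G$, so some iterate $f^N(x)$ lies in $\inte(J_{i_0})$.

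The main step is then to show by contradiction that $f^n(x)\in\bigcup_{J_i\in\G}\inte(J_i)$ for every $n\geq N$. Let $s>N$ be the smallest offending time. If $f^s(x)\in C_0(f)$, the Markov condition propagates the orbit into $C_0(f)$, rendering $\omega(x)$ finite, a contradiction; hence $f^s(x)\in\inte(J_k)$ with $J_k\in\J\setminus\G$, while $f^{s-1}(x)\in\inte(J_j)$ with $J_j\in\G$. Then iterates of $\inte(J_j)$ reach $\inte(J_k)$, and composing with the covering property inside $\G$ shows that iterates of $\inte(J_{i_0})$ also reach $\inte(J_k)$; the irreducibility dichotomy then forces iterates of $\inte(J_k)$ to miss $\inte(J_{i_0})$, so $\{f^{s+m}(x)\}_{m\geq 1}$ cannot accumulate at $p\in\inte(J_{i_0})$, contradicting $p\in\omega(x)$.

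Once the orbit is confined to $\bigcup_{J_i\in\G}\inte(J_i)$ for all $n\geq N$, its symbolic sequence belongs to $S(f,\G)$, and the construction from the proof of Lemma~\ref{FG}, together with Lemma~\ref{L4} and the generator property, shows that $f^n(x)$ is approximated arbitrarily well by iterates of the universal point whose $\omega$-limit set is $\tilde{\omega}$; hence $f^n(x)\in\tilde{\omega}$ for all $n\geq N$. I expect the main obstacle to be combining the irreducibility dichotomy with the covering property inside $\G$ to eject the tail of the orbit from $\inte(J_{i_0})$, and to carefully justify the passage from \emph{orbit remains in} $\bigcup_{J_i\in\G}\inte(J_i)$ to \emph{orbit lies in} $\tilde{\omega}$.
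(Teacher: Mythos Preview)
Your plan is correct and leads to essentially the same proof as the paper's, built on Lemma~\ref{FG}, the irreducibility of $\G$, and the absence of attracting periodic orbits under the Markov and generator hypotheses. The one structural difference is in how the orbit is first placed inside $\G$: the paper argues in one stroke that if $f^n(x)\notin\omega(x)$ for all $n$ then $\omega(x)$ would contain an attracting periodic point, so some $f^N(x)\in\omega(x)\subset\tilde\omega$, and then splits on whether $f^N(x)\in C(f)$; you instead split on whether $\omega(x)$ is finite or infinite and, in the infinite case, use only that the orbit accumulates at a point $p\in\inte(J_{i_0})$ to get $f^N(x)\in\inte(J_{i_0})$. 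Your route has the advantage of making the irreducibility dichotomy fully explicit (the paper just writes ``from irreducibility of $\G$ we see that $f^{N+j}(x)\in J_{\alpha_j}\in\G$''), while the paper's route is shorter because the single ``orbit hits $\omega(x)$'' step handles both cases at once. The final passage from ``orbit stays in $\bigcup_{J_i\in\G}\inte(J_i)$'' to ``$f^n(x)\in\tilde\omega$'' is exactly the observation recorded in the proof of Lemma~\ref{FG} that $\tilde\omega$ contains every point whose forward orbit remains in $\G$, so your concern there is already addressed by the paper.
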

\begin{proof}
From Lemma~\ref{FG} $\tilde{\omega}\subset \bigcup_{J_i \in \G} \overline{J_i} $ where $\G$ is irreducible subset of $\J$.\\
If $f^n(x)\notin \omega(x) $ for any positive integer $n$, then in $\omega(x)$ has to be some attracting periodic point, which is contradiction with existence of generator. Therefore there is a positive integer $N$ such that $f^N(x)\in \omega(x)\subset \tilde{\omega}$ and $f^N(x)\in J_i$ where $J_i \in \G$. 
If $f^N(x)\in J_i \cap C(f)$, then $\omega$-limit set of $f^N(x)$ is a cycle which cannot be attracting and therefore $f^{N+j}\in \omega(x)$ for any positive integer $j$. If $f^N(x)\in J_i \cap C(f)^c$ then from irreducibility of $\G$ we see that $f^{N+j}(x) \in J_{\alpha_j}$ where $J_{\alpha_j}\in \G$ for any $j$, therefore $f^{N+j}(x) \in \tilde{\omega}$, since $\tilde{\omega}$ contains any point of $J_{\alpha_j}\in \G$ whose itterations stays in $\bigcup_{J_{\alpha_i}\in \G}$ (see proof of Lemma~\ref{FG} for details).
\end{proof}

The rest of this section deals with results about properties of $f^{\ast}$-periodic set and are used to prove Lemma 14, which is irreplaceable for Main Theorem. In the rest of this section we assume that our function $f$ is from $\M(I)$.

\begin{lemma}\label{DenSet}
$C(f)$ is a dense subset of $I$.
\end{lemma}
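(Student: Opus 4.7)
The plan is to argue by contradiction. Suppose $C(f)$ is not dense in $I$. Then there exists a nondegenerate open subinterval $U \subset I$ with $U \cap C(f) = \emptyset$; in particular, no iterate of any point of $U$ ever meets the set of critical points $C_0(f)$.

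Since $U$ is connected and disjoint from $C_0(f)$, and the interiors $\inte(J_j)$ are the connected components of $I\setminus C_0(f)$, the interval $U$ lies entirely in $\inte(J_{k_0})$ for some index $k_0$. Because $f$ is continuous and strictly monotone on $\inte(J_{k_0})$, the image $f(U)$ is again a nondegenerate interval, and it is disjoint from $C_0(f)$: otherwise $U$ would contain a point of $f^{-1}(C_0(f))\subset C(f)$, contradicting the choice of $U$. Applying this reasoning inductively, I obtain a sequence of indices $\{k_i\}_{i=0}^{\infty}$ with $J_{k_i}\in \J$ such that $f^i(U)$ is a nondegenerate subinterval of $\inte(J_{k_i})$ for every $i\geq 0$. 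Consequently $U\subset \bigcap_{i=0}^{\infty}f^{-i}(J_{k_i})$. But $\J$ is a generator, so this intersection contains at most one point, contradicting the nondegeneracy of $U$. Hence $C(f)$ must be dense.

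The only delicate point is the inductive step: one has to combine the hypothesis $U\cap C(f)=\emptyset$ with the continuity and strict monotonicity of $f$ on $\inte(J_{k_i})$ to conclude simultaneously that $f^{i+1}(U)$ is a nondegenerate interval and that it lies in a single $\inte(J_{k_{i+1}})$. Once this is in hand, invoking the generator property finishes the argument immediately. No further use of the Markov condition or of the lemmas above is required.
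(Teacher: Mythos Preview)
Your argument is correct and follows essentially the same route as the paper's proof: assume a nondegenerate interval avoids $C(f)$, observe that all its forward images then stay inside a single partition element at each step, and conclude that the interval sits inside $\bigcap_{i=0}^\infty f^{-i}(J_{k_i})$, contradicting the generator property. The paper's version is terser (it starts directly with $(a,b)\subset J_{k_0}$ and omits the inductive justification you spell out), but the logic is identical.
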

\begin{proof}
Let $(a,b)\subset J_{k_0}$, where $J_{k_0}$ is a set from generator $\J$, then there is a point $p\in (a,b)$ and positive integer $j$ such that $f^{j}(p)\in C_0(f)$. Otherwise for any positive integer $n$ $f^n((a,b))\subset J_{k_n}$ where $J_{k_n}$ is a set from generator. Therefore $\bigcap_{i=0}^{\infty} f^{-i}(J_{k_i}) \supset (a,b) $ which is contradiction with generator.

\end{proof} 

\begin{lemma}\label{PerInt}
Let $\J=\{J_1,\dots , J_n\} $ be the generator. For any interval $(a,b)\subset J_i$, $J_i\in \J$, there is an interval $(c,d)\subset (a,b)$, a positive integer $k$ and a set $L\subset \{1,\dots,n \}$ such that $\inte (f^k ((c,d))) = \inte ( \bigcup_{l\in L} J_l ) $.
\end{lemma}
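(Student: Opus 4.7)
The plan is to iterate $f$ on $(a,b)$, using the generator property to force the image eventually out of every single $\inte(J_m)$ and the Markov condition to propagate a critical point as an endpoint of the successive images, until the image displays two critical points (one on the boundary, one in the interior). The open interval between them is then automatically of the form $\inte(\bigcup_{l\in L}J_l)$, and pulling it back through the (then already established) homeomorphism $f^k|_{(a,b)}$ gives the required subinterval $(c,d)$.

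Since $(a,b)\subset J_i$ is open, $(a,b)\subset\inte(J_i)$. As long as $f^{j-1}((a,b))\subset\inte(J_{i_{j-1}})$, $f$ is continuous and strictly monotone on the preceding image, so inductively $f^{j}|_{(a,b)}$ is a composition of continuous strictly monotone maps and therefore a homeomorphism onto the open interval $f^{j}((a,b))$. If every iterate stayed inside some $\inte(J_{i_j})$, then $(a,b)\subset\bigcap_{j\geq 0}f^{-j}(J_{i_j})$, which by the generator assumption has at most one point -- a contradiction. Let $k_0\geq 1$ be the smallest integer with $U:=f^{k_0}((a,b))$ not contained in any $\inte(J_m)$; then $U$ is an open interval, $f^{k_0}\colon(a,b)\to U$ is a homeomorphism, and $U$ contains some $c^{\ast}\in C_0(f)$ in its interior.

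If $\overline U$ contains a second critical point $c^{\ast\ast}$, then the open interval strictly between $c^{\ast}$ and $c^{\ast\ast}$ lies inside $U$, has both endpoints in $C_0(f)$, and hence equals $\inte(\bigcup_{l\in L}J_l)$ for the consecutive index set $L$ whose generator pieces fill this gap; its homeomorphic preimage yields the required $(c,d)$ with $k=k_0$. Otherwise $c^{\ast}$ is the unique critical point in $\overline U$, so $U$ splits as $(p,c^{\ast})\sqcup\{c^{\ast}\}\sqcup(c^{\ast},q)$, with the two halves lying in adjacent pieces $\inte(J_{l_-})$ and $\inte(J_{l_+})$ of $\J$. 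Let $c'\in(a,b)$ be the $f^{k_0}$-preimage of $c^{\ast}$ and pick one side, say $(a,c')$, so that $f^{k_0}((a,c'))=(p,c^{\ast})\subset\inte(J_{l_-})$ is an open interval with a critical endpoint at $c^{\ast}$.

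Now iterate on $(a,c')$. The Markov condition ensures the critical-endpoint property propagates: whenever $W=f^{k_0+r}((a,c'))$ lies in some $\inte(J_m)$ and has a critical endpoint, that endpoint must coincide with a boundary point of $J_m$, and the corresponding one-sided limit of $f$ is in $C_0(f)$, serving as an endpoint of $f(W)$. The generator argument applied to $(a,c')$ then produces a smallest $k_1>k_0$ for which $V:=f^{k_1}((a,c'))$ is not contained in any $\inte(J_m)$. At this step $V$ has both a critical endpoint (inherited from $c^{\ast}$ through the Markov propagation) and a critical point in its interior, so the open interval between them lies inside $V$ and equals $\inte(\bigcup_{l\in L}J_l)$ for the appropriate $L$; its preimage under the homeomorphism $f^{k_1}\colon(a,c')\to V$ gives the desired $(c,d)\subset(a,b)$ with $k=k_1$. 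The main obstacle is maintaining the homeomorphism property across iterations in the presence of the discontinuities of $f$; this is resolved by choosing $k_0$ and $k_1$ to be exactly the first moments the image leaves a single generator piece, so that all intermediate images sit inside one piece and $f^k$ restricted to the currently-active subinterval remains a genuine homeomorphism onto its image.
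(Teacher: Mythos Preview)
Your argument is correct. The key ingredients --- the generator property (forcing the iterated image of any subinterval to eventually leave every single $\inte(J_m)$) and the Markov condition (pushing a critical endpoint forward to another critical endpoint) --- are exactly the right ones, and your careful choice of $k_0$ and $k_1$ as \emph{first} exit times is precisely what guarantees that $f^{k_0}|_{(a,b)}$ and $f^{k_1}|_{(a,c')}$ remain homeomorphisms, so that the desired pull-back $(c,d)$ exists.

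The paper reaches the same conclusion by a shorter, less explicit route: it first proves (as a separate lemma) that $C(f)$ is dense in $I$, and then simply picks two points $c,d\in (a,b)\cap C(f)$ with $f^{j_c}(c),f^{j_d}(d)\in C_0(f)$ and $f^{k}(c)\neq f^{k}(d)$ for $k=\max\{j_c,j_d\}$; the Markov condition keeps both images in $C_0(f)$ at time $k$, and the result follows. In effect the paper selects the two critical preimages directly, while you locate them dynamically by iterating until a critical point appears, splitting, and iterating again. Your version has the advantage of making the homeomorphism and ``no intermediate critical preimage'' requirements completely explicit --- something the paper's two-line proof leaves to the reader --- at the cost of being longer. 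Conversely, the paper's approach shows that once the density of $C(f)$ is isolated as a lemma, the present statement is almost immediate.
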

\begin{proof}
By Lemma~\ref{DenSet} there are points $c,d \in (a,b)$, $c<d$, and positive integers $j_c,j_d$ such that $f^{j_c}(c)\in C_0(f)$, $f^{j_d}(d)\in C_0(f)$ and 
$f^{\max\{j_c,j_d\} }(c)\neq f^{\max\{j_c,j_d\} }(d) $. The lemma follows from that. 
\end{proof}

\begin{corollary}\label{cor1}
If $K=(a,b)\subsetneq \inte(J_i) $, where $J_i\in \J$, then $K$ is not $f^{\ast}$-periodic set.
\end{corollary}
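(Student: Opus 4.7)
The plan is to proceed by contradiction: assume $K=(a,b)\subsetneq\inte(J_i)$ admits $m\geq 1$ with $\inte(f^m(K))=K$. My strategy is to use Lemma~\ref{PerInt} inside $K$ to produce a subinterval whose forward iterate covers the interior of an entire generator interval, and then to trap this interior inside $(a,b)$; pairwise disjointness of the generator together with $(a,b)\subsetneq\inte(J_i)$ will then give the contradiction.

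The first step I would carry out is upgrading the single-exponent hypothesis $\inte(f^m(K))=(a,b)$ to $\inte(f^{jm}(K))=(a,b)$ for every $j\geq 1$. Since $K$ lies in a single generator interior, $f^m(K)$ is a finite union of intervals together with possibly finitely many isolated points coming from orbits of critical points; combined with the periodicity this forces $f^m(K)=(a,b)\cup F$ for some finite $F$ disjoint from $(a,b)$. Re-applying $f^m$ only adjoins finitely many further isolated points, and such additions cannot enlarge the topological interior, so $\inte(f^{jm}(K))=(a,b)$ for every $j$.

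With this in hand, Lemma~\ref{PerInt} applied to $(a,b)\subset J_i$ yields $(c,d)\subset(a,b)$, a positive integer $k$, and $L\subseteq\{1,\dots,n\}$ with $\inte(f^k((c,d)))=\inte(\bigcup_{l\in L}J_l)$; in particular $\inte(J_l)\subseteq f^k((c,d))$ for any $l\in L$. The Markov condition together with monotonicity of $f$ on $\inte(J_l)$ implies that $f(\inte(J_l))$ is a union of generator interiors, so inductively $f^s(\inte(J_l))\supseteq\inte(J_{l'})$ for some index $l'$ and every $s\geq 0$. Choosing $N$ with $Nm\geq k$ and setting $s=Nm-k$ gives
\[
\inte(J_{l'})\subseteq f^s(\inte(J_l))\subseteq f^{Nm}((c,d))\subseteq f^{Nm}(K),
\]
so $\inte(J_{l'})\subseteq\inte(f^{Nm}(K))=(a,b)\subsetneq\inte(J_i)$. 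Pairwise disjointness of the generator intervals forces $l'=i$, producing the contradiction $\inte(J_i)\subseteq(a,b)\subsetneq\inte(J_i)$.

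The step I expect to be the main bottleneck is the iteration of $f^*$-periodicity in paragraph two, since the definition only provides control at a single exponent; I must rule out that the accumulating isolated-point debris produced under repeated application of $f^m$ fills in any gap larger than $(a,b)$. Once that is settled, the remaining argument is a direct combination of Lemma~\ref{PerInt} with Markov-driven forward expansion of generator interiors.
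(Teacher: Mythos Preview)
Your proof is correct and follows the route the paper intends. The paper states this result as an immediate corollary of Lemma~\ref{PerInt} and gives no separate argument; your write-up fills in exactly the details one needs, namely iterating the $f^{\ast}$-periodicity identity to all multiples of $m$ and then using Lemma~\ref{PerInt} together with the Markov condition to trap a full generator interior inside $(a,b)$. The step you flag as a bottleneck is genuine but routine: since $f^m(K)$ is a finite union of (possibly degenerate) intervals and its interior is $(a,b)$, one has $f^m(K)=(a,b)\cup F$ with $F$ finite, and applying $f^m$ again only adjoins another finite set, so the interior stays $(a,b)$ for every iterate. One small wording point: $f(\inte(J_l))$ is not literally a union of generator interiors but rather an open interval with endpoints in $C_0$, hence it \emph{contains} some $\inte(J_{l'})$; this is all you use, so the argument is unaffected.
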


This corollary tells us that only possible $f^{\ast}$-periodic set are sets from generator, or some union of sets from generator. To see that there is always a $f^{\ast}$-periodic set it is sufficient to look at interior of iterations of whole set $I$. Let $\inte(f^i(I))= \inte ( \bigcup_{l\in L_i} J_l )$ where $L_i \subset \{ 1, \dots , n \} $ and $J_l \in \J$. Either $L_{i+1} \subsetneq L_i$, or $L_{i+1}=L_i$. Since we have finite number of set in generator, equality is inevitable. Any $f^{\ast}$-periodic set $U$ of period $m\geq 1$ is either $U=\inte(J_l)$ or $U=\inte(\cup_{l\in L} J_l )$, where $J_l \in \J$ and $L\subset \{1,\dots,n \}$.

\begin{corollary}\label{cor}
Let $\omega\in \Omega$. Then there is a minimal $f^{\ast}$-periodic set $U$ such that $\text{Orb}(U)\supset \omega$.
\end{corollary}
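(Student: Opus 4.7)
The plan is to use Lemma~\ref{FG} to locate an irreducible family $\G\subset\J$ whose closure contains $\omega$, and then build $U$ as one cyclic class under the $f$-transition structure on $\G$.

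First, I apply Lemma~\ref{FG} to obtain $\G\subset\J$ irreducible with $\omega\subset\bigcup_{J\in\G}\overline{J}$. Because $\G$ is irreducible, the one-step transition digraph on $\G$ (edges $J_i\to J_j$ whenever $\inte(J_j)\subset f(\inte(J_i))$, which by the Markov condition and strict monotonicity captures all one-step transitions) is strongly connected. Let $m$ be its period and write $\G=G_0\sqcup G_1\sqcup\cdots\sqcup G_{m-1}$ for the corresponding cyclic decomposition, so that every edge goes from $G_k$ into $G_{k+1\bmod m}$.

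Second, set $U=\bigcup_{J\in G_0}\overline{J}$. The Markov condition forces boundaries of generator intervals to be mapped to critical points, so $f^m(U)$ and $U$ coincide on interiors; hence $U$ is an $f^*$-periodic set of period $m$, and
\[
\text{Orb}(U)=\bigcup_{i=0}^{m-1}f^i(U)=\bigcup_{J\in\G}\overline{J}\supset\omega.
\]
For minimality, by the discussion preceding Corollary~\ref{cor1} every $f^*$-periodic set is a union of closures of generator intervals, so a proper $f^*$-periodic $V\subsetneq U$ must be of the form $\bigcup_{J\in G_0'}\overline{J}$ for some $G_0'\subsetneq G_0$, with $f^*$-period necessarily a multiple of $m$ (since $f$ cycles the classes $G_k$); but then $f^m|_{G_0'}$ would already be strongly connected, contradicting the fact that $G_0$ is a single cyclic class of the irreducible $\G$.

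The main obstacle is step two: the cyclic decomposition of a strongly connected digraph is classical, but transferring it cleanly into the $f^*$-periodic setting requires careful bookkeeping of images of boundaries and critical points (via the Markov condition) so that $\inte(f^m(U))=\inte(U)$ holds exactly and not merely up to a negligible set. An alternative, lighter route that bypasses the cyclic decomposition is to note that only finitely many $f^*$-periodic sets exist (each a union of generator closures, and $\J$ is finite), and that the absorbing set $\inte(\bigcup_{l\in L_\infty}J_l)$ from the paragraph before the corollary is one such $f^*$-periodic set with orbit containing every $\omega\in\Omega$; a set-theoretic minimum among those whose orbit contains $\omega$ then exists by finite descent.
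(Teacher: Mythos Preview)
Your ``alternative, lighter route'' at the end is exactly how the paper obtains the corollary: there is no separate proof in the text; it is meant to follow from the paragraph preceding it, which observes that every $f^{\ast}$-periodic set is (the interior of) a finite union of generator intervals, that iterating $I$ produces one whose orbit contains every $\omega\in\Omega$, and that a minimal such set exists by finite descent. So that part is correct and matches the paper.

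Your main construction via the cyclic decomposition, however, has a genuine gap that is not the boundary bookkeeping you flag. The irreducibility condition on $\G$ only says that for $J_k\in\J\setminus\G$ one cannot have \emph{both} $\G\to^{\ast}J_k$ and $J_k\to^{\ast}\G$; it does permit one-way escape $\G\to J_k$ with no return. Concretely, for $J_i\in G_0$ the Markov image $f(\inte(J_i))$ may cover some $\inte(J_k)$ with $J_k\notin\G$, so $\inte\bigl(f^m(\bigcup_{J\in G_0}\overline{J})\bigr)$ can properly contain $\inte\bigl(\bigcup_{J\in G_0}\overline{J}\bigr)$. Hence your $U$ need not be $f^{\ast}$-periodic at all. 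The obstruction is about whole generator intervals outside $\G$, not about images of critical points.

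Even setting that aside, your minimality step is not quite right: that $G_0$ is a single cyclic class of the strongly connected digraph on $\G$ does not preclude a proper subset $G_0'\subsetneq G_0$ from being $f^m$-invariant (and hence giving a smaller $f^{\ast}$-periodic set); cyclic classes govern the residues of path lengths, not the absence of proper invariant subgraphs under $f^m$. Since your finite-descent alternative already delivers the corollary cleanly and is what the paper intends, I would drop the cyclic-decomposition argument or, if you want to keep it, restrict $f$ to $\bigcup_{J\in\G}\overline{J}$ before forming the digraph so that escape is impossible by construction.
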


\begin{lemma}\label{PreStTr}
Let $U$ be a minimal $f^{\ast}$-periodic set (of period $m\geq 1$) such that $\text{Orb}(U)\supset \omega\in \Omega$. Then if $K_1,K_2$ are intervals such that $K_1\cap \omega$ is infinite and $K_2\subset \text{int}(U)$ then $f^{a+bm}(K_1)\supset K_2$ for some $a$ and for all sufficiently large $b$.
\end{lemma}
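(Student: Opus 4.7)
The plan is to combine three ingredients: the local spreading given by Lemma~\ref{PerInt}, the irreducibility of the subset $\G\subset\J$ associated with $\omega$ (Lemma~\ref{FG}), and aperiodicity of the $f^m$-dynamics restricted to the index set $L_U$ of $U$, which I derive from the minimality of $U$.

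First I locate $K_1$ inside $\G$. Since $\omega$ is perfect (Lemma~\ref{PerfB}) and sits inside the finite union $\bigcup_{J\in\G}\overline{J}$ (Lemma~\ref{FG}), some $J_{l_0}\in\G$ has $K_1\cap\inte(J_{l_0})\cap\omega$ infinite, so $(c,d):=K_1\cap\inte(J_{l_0})$ is a nondegenerate open interval. I pick $p\in(c,d)\cap\omega\setminus C(f)$ (possible because $C(f)$ is countable while $\omega\cap(c,d)$ is uncountable by perfection); its trajectory stays in interiors of $\G$-intervals. Using density of $C(f)$ (Lemma~\ref{DenSet}), I shrink to $(c',d')\ni p$ with $c',d'$ eventually mapping to $C_0(f)$ and apply Lemma~\ref{PerInt} to obtain $k\geq 1$ and $L'\subset\{1,\dots,n\}$ with $\inte(f^k((c',d')))=\inte(\bigcup_{l\in L'}J_l)$. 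Because $f^k(p)\in\inte(J_{\beta_k})$ with $\beta_k\in\G$, necessarily $l_1:=\beta_k\in L'\cap\G$, and hence $f^k(K_1)\supset\inte(J_{l_1})$.

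Next I push into $U$ and build the graph-theoretic framework. Since $\text{Orb}(U)\supset\omega$ and $\omega$ meets $\inte(J)$ for every $J\in\G$, I have $J_{l_1}\subset f^{s_1}(U)$ for some $0\leq s_1<m$; let $s\equiv -s_1\pmod m$ with $0\leq s<m$ and put $a:=k+s$. The Markov condition gives $f^s(\inte(J_{l_1}))=\bigcup_{l\in L_1}\inte(J_l)$ for some nonempty $L_1\subset L_U$, so I pick $l_1'\in L_1$ to obtain $f^a(K_1)\supset\inte(J_{l_1'})$. Now let $\Gamma$ be the directed graph on $L_U$ with edge $l\to l'$ iff $f^m(\inte(J_l))\supset\inte(J_{l'})$. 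Strong connectivity of $\Gamma$ follows from irreducibility of $\G$: any $n_{l,l'}$ witnessing reachability between two vertices of $L_U$ must be a multiple of $m$, otherwise the image of $U$ would land off $U$, violating $\inte(f^m(U))=\inte(U)$.

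For aperiodicity of $\Gamma$: if its period were $p\geq 2$ with cyclic classes $C_0,\dots,C_{p-1}\subset L_U$, then $V_0:=\inte(\bigcup_{l\in C_0}J_l)\subsetneq U$ would be $f^*$-periodic of period $mp$ with $\text{Orb}(V_0)=\text{Orb}(U)\supset\omega$, contradicting minimality of $U$. Irreducibility plus aperiodicity of the finite graph $\Gamma$ then supply, for each $l\in L_U$, a threshold $B_l$ with $f^{bm}(\inte(J_{l_1'}))\supset\inte(J_l)$ for all $b\geq B_l$. Since $K_2\subset\inte(U)=\inte(\bigcup_{l\in L_U}J_l)$ is compact, it meets only finitely many $\inte(J_l)$'s, indexed by some $L_2\subset L_U$; taking $B:=\max_{l\in L_2}B_l$ yields, for every $b\geq B$, $f^{a+bm}(K_1)\supset f^{bm}(\inte(J_{l_1'}))\supset\bigcup_{l\in L_2}\inte(J_l)\supset K_2$, where any boundary point of a $J_l$ lying in $K_2\subset\inte(U)$ is sandwiched between two covered $\inte(J_l)$'s. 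The hard part will be the aperiodicity step: showing that a nontrivial cyclic decomposition of $\Gamma$ yields a strictly smaller $f^*$-periodic set with the same orbit needs care, and en route one must verify that $L_U\subset\G$ (or suitably sharpen the irreducibility hypothesis) so that strong connectivity of $\Gamma$ really applies to all of $L_U$.
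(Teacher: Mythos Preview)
Your strategy matches the paper's: locate $K_1$ inside some $J_l\in\G$ via Lemma~\ref{FG}, expand with Lemma~\ref{PerInt}, and finish using irreducibility together with Corollary~\ref{cor}. The paper compresses everything after Lemma~\ref{PerInt} into the single sentence ``the proof follows from Corollary~\ref{cor} and property of irreducibility,'' whereas you unpack it into a Perron--Frobenius style argument on the Markov graph $\Gamma$ over $L_U$. That elaboration is natural, and your aperiodicity-from-minimality idea is exactly the right mechanism behind the ``for all sufficiently large $b$'' clause.

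The concern you flag at the end is real and, as written, is a genuine gap: $L_U\subset\G$ can fail. Take $\G=\{J_1,J_2\}$ with $f(J_1)=J_1\cup J_2\cup J_3$, $f(J_2)=J_1$, $f(J_3)=J_4$, $f(J_4)=J_4$; then the only $f^{\ast}$-periodic set whose orbit covers $\omega$ is $U=J_1\cup J_2\cup J_3\cup J_4$, and your graph $\Gamma$ on $L_U=\{1,2,3,4\}$ is \emph{not} strongly connected (there is no path from $4$ back into $\G$), so neither your connectivity claim nor your cyclic-class aperiodicity argument applies to $\Gamma$ as a whole. The repair is to run your primitivity argument only on the strongly connected piece $\G\cap L_U$ (irreducibility of $\G$ gives connectivity there, and minimality of $U$ still blocks a nontrivial cyclic decomposition of that piece), choose $l_1'$ inside $\G\cap L_U$, and then observe that minimality of $U$ forces every vertex of $L_U$ to be reachable in $\Gamma$ from $\G\cap L_U$: otherwise the (eventually stabilized) reachable set would give a strictly smaller $f^{\ast}$-periodic set whose orbit still covers $\omega$. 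Composing an arbitrarily long path inside $\G\cap L_U$ with a bounded tail out to the target $l\in L_U$ then yields paths of every sufficiently large length, which is what you need. The paper's one-line finish glosses over exactly the same point.
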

\begin{proof}
By Lemma~\ref{FG} $\omega\subset \bigcup_{J_i \in \G} \overline{J_i}$ where $\G$ is an irreducible subset of $\J$. We can choose $J_l\in\G$ such that $K_1\cap J_l \cap \omega$ is infinite, $\inte (J_l \cap K_1) $ is an interval under $J_l$, by Lemma~\ref{PerInt} we can find $k$ such that $f^k(\inte (J_l \cap K_1))\supset \inte (\bigcup_{J_g \in \G_1} J_g )  $, where $\G_1 \subset \G$. The proof follows from corollary~\ref{cor} and property of irreducibility.
\end{proof}

We will describe situation above by saying that $f|_{\omega}$ is \textit{strongly transitive} in $U$.\\

It is easy to see that we can choose intervals $K_1, K_2$ such that they form a horseshoe.
\begin{corollary}
Any piecewise monotone function $f$ with Markov condition and generator has positive topological entropy.
\end{corollary}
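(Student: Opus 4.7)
The plan is to produce a two-branch horseshoe for some iterate $f^n$: disjoint closed intervals $K_1, K_2$ with $f^n(K_i) \supset K_1 \cup K_2$ for $i = 1, 2$. The classical symbolic coding argument then gives $h(f^n) \geq \log 2$, hence $h(f) > 0$.

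The setup is largely assembled from the preceding lemmas. The discussion before Corollary~\ref{cor} shows that the decreasing chain $\inte(f^i(I))$ stabilises to some minimal $f^{\ast}$-periodic set $U$, whose constituent generator pieces form an irreducible subfamily $\G \subset \J$, and Lemma~\ref{FG} then produces a maximal basic $\omega$-limit set $\omega \in \Omega$ with $\omega \subset \bigcup_{J_i \in \G}\overline{J_i}$. Let $m$ be the period of $U$. Since $\omega$ is perfect by Lemma~\ref{PerfB}, it meets $\inte(U)$ in infinitely many points, and because $\inte(U)$ is a finite disjoint union of open intervals, two such accumulation points $p_1, p_2$ of $\omega$ can be chosen in a single component. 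Enclose them in disjoint closed intervals $K_1, K_2$ inside that component, and let $K$ be the smallest closed interval containing $K_1 \cup K_2$; each $K_i \cap \omega$ is then infinite.

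Lemma~\ref{PreStTr} now supplies integers $a_1, a_2$ with $f^{a_i + bm}(K_i) \supset K$ for all sufficiently large $b$, $i = 1, 2$. The main obstacle is synchronising these two arithmetic progressions into one common exponent. Here I would argue that the sets $\inte(f^j(U))$, $j = 0, \dots, m-1$, rotate among $m$ pairwise disjoint open pieces by minimality and $f^{\ast}$-periodicity of $U$, so an inclusion $f^{a_i + bm}(K_i) \supset K$ with $K_i, K \subset \inte(U)$ forces $a_i + bm$ to be a multiple of $m$; hence each $a_i$ itself lies in $m\N$. Both progressions then belong to $m\N$, and any sufficiently large common multiple of $m$ yields an $n$ with $f^n(K_1) \supset K_1 \cup K_2$ and $f^n(K_2) \supset K_1 \cup K_2$. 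This is a two-branch horseshoe for $f^n$, which finishes the proof by the standard entropy estimate.
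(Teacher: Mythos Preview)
Your approach is the paper's: the paper simply remarks that Lemma~\ref{PreStTr} allows one to choose $K_1,K_2$ forming a horseshoe and then cites Block--Coppel for the entropy consequence, while you carry out that horseshoe construction in detail, including the synchronisation step. One small imprecision in your setup: the stabilised set of the chain $\inte(f^i(I))$ has period~$1$ and need be neither minimal nor a single irreducible block, but this only helps (with $m=1$ no synchronisation is needed), and alternatively passing to a genuinely minimal $U$ via Corollary~\ref{cor} makes your cyclic-class disjointness argument go through.
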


For more details see \cite{BC} (Chapter VIII., proposition 8).

\section{Distributional functions}

\begin{lemma}\label{Spf}
Let $f\in \M(I)$ and $x\in C(f)^c $. Then for any $n\in \mathbb{N}$ there is $\delta>0$ such that $\left.f\right|_{f^i(x-\delta,x+\delta})$ is continuous for $0\leq i\leq n$.
\end{lemma}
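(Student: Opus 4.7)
Since $x\in C(f)^c$, for each $0\le i\le n$ the iterate $f^i(x)$ avoids $C_0(f)$, so there is an index $k_i\in\{1,\dots,N\}$ with $f^i(x)\in\inte(J_{k_i})$. In particular each $f^i(x)$ is a point at which $f$ is continuous (and strictly monotone). The strategy is to shrink a neighborhood of $x$ just enough that its first $n$ iterates remain inside the corresponding open pieces $\inte(J_{k_i})$; continuity of $f$ on each such piece then gives the conclusion.

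I would carry this out by a finite descending induction. First pick $\epsilon_n>0$ small enough that $(f^n(x)-\epsilon_n,f^n(x)+\epsilon_n)\subset\inte(J_{k_n})$. Given $\epsilon_{i+1}>0$ with $(f^{i+1}(x)-\epsilon_{i+1},f^{i+1}(x)+\epsilon_{i+1})\subset\inte(J_{k_{i+1}})$, use continuity of $f$ at the interior point $f^i(x)\in\inte(J_{k_i})$ to choose $\epsilon_i>0$ such that
\[
(f^i(x)-\epsilon_i,f^i(x)+\epsilon_i)\subset\inte(J_{k_i})
\]
and
\[
f\bigl((f^i(x)-\epsilon_i,\,f^i(x)+\epsilon_i)\bigr)\subset(f^{i+1}(x)-\epsilon_{i+1},\,f^{i+1}(x)+\epsilon_{i+1}).
\]
Setting $\delta:=\epsilon_0$, a straightforward forward induction on $i$ then yields $f^i((x-\delta,x+\delta))\subset(f^i(x)-\epsilon_i,f^i(x)+\epsilon_i)\subset\inte(J_{k_i})$ for every $0\le i\le n$. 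Since $f$ is continuous on each $\inte(J_{k_i})$ by the definition of a piecewise monotone map, the restriction of $f$ to $f^i((x-\delta,x+\delta))$ is continuous for every $0\le i\le n$, as required.

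The hypothesis $x\in C(f)^c$ is used in full: it is what ensures none of the finitely many points $x,f(x),\ldots,f^n(x)$ lies on $\fr(J_{k_i})$, which is precisely the locus where $f$ may fail to be continuous. Because the iterates visited are finite in number and each lies in the open set $\inte(J_{k_i})$, there is no genuine obstacle — the argument is a routine chaining of continuities, and no use of the Markov condition or the generator property is needed beyond the piecewise-monotone structure already built into $\M(I)$.
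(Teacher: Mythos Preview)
Your argument is correct, but it is organised differently from the paper's. The paper dispatches the lemma in one line by observing that $C_n(f)$ is finite (being the union of finitely many preimages of the finite set $C_0(f)$ under a piecewise monotone map), so since $x\notin C_n(f)$ one may simply choose $\delta>0$ with $(x-\delta,x+\delta)\cap C_n(f)=\emptyset$; this immediately forces every $f^i$-image of that interval, for $0\le i\le n$, to lie inside a single piece $\inte(J_{k_i})$. Your approach instead builds the same $\delta$ by a backward continuity-chaining induction along the orbit segment $x,f(x),\dots,f^n(x)$. Both routes are elementary and neither needs the Markov or generator hypotheses; the paper's version is shorter because it packages the whole induction into the single global statement ``$C_n(f)$ is finite'', while yours makes the local mechanism (continuity of $f$ at each interior iterate) explicit.
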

\begin{proof}
For any positive integer $n$ the set $C_n(f)$ is finite, therefore there is $\delta>0$ such that $(x-\delta,x+\delta)\cap C_n(f)=\emptyset$.
\end{proof}

The following lemmas are modified version of lemmas from \cite{ScSm1} for our case. From now on let $F_{xy}(t)$, $F^{\ast}_{xy}(t)$ and $\xi(x,y,k,t)$ be as is defined in the section Introduction.

\begin{lemma}\label{V51}
Let $f\in \M(I)$, $c\in C(f)$ and $x,y \in C(f)^c $. \\
Case a): Let $F_{xy}^\ast(t)$ and $F_{xy}(t)$ be continuous at $t$, then, for any $\varepsilon>0$, there are positive integers $k,q$, arbitrarily large, and $\delta>0$ such that
\begin{equation}
\frac{1}{k}\xi(u,v,k,t)<F_{xy}(t)+\varepsilon
\end{equation}
and
\begin{equation}
\frac{1}{q}\xi(u,v,q,t)>F^{\ast}_{xy}(t)-\varepsilon
\end{equation}
whenever $\vert x-u\vert<\delta$ and $\vert y-v\vert<\delta$\\
Case b): Let $F_{xc}^\ast(t)$ and $F_{xc}(t)$ be continuous at $t$, then for any $\varepsilon>0$, there are positive integers $k,q$, arbitrarily large, and $\delta>0$ such that
\begin{equation}
\frac{1}{k}\xi(u,c,k,t)<F_{xc}(t)+\varepsilon
\end{equation}
and
\begin{equation}
\frac{1}{q}\xi(u,c,q,t)>F^{\ast}_{xc}(t)-\varepsilon
\end{equation}
whenever $\vert x-u\vert<\delta$.
\end{lemma}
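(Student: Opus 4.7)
The plan is to reduce the statement to an approximation-by-monotonicity argument that converts closeness of trajectories into containment of the ``count'' sets defining $\xi$. The key input is Lemma~\ref{Spf}: since $x,y\in C(f)^c$, for any prescribed length $n$ we can find a neighbourhood on which $f^0,f^1,\dots,f^{n-1}$ are all continuous at $x$ and $y$, so that iterates of $u,v$ track iterates of $x,y$ uniformly on the first $n$ steps.

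First, exploit continuity of $F_{xy}$ and $F^\ast_{xy}$ at $t$ to choose $t_1<t<t_2$ with $F_{xy}(t_2)<F_{xy}(t)+\varepsilon/2$ and $F^\ast_{xy}(t_1)>F^\ast_{xy}(t)-\varepsilon/2$. By the definitions of $\liminf$ and $\limsup$ there exist arbitrarily large $k$ and $q$ with
\begin{equation*}
\tfrac{1}{k}\xi(x,y,k,t_2)<F_{xy}(t_2)+\varepsilon/2,\qquad \tfrac{1}{q}\xi(x,y,q,t_1)>F^\ast_{xy}(t_1)-\varepsilon/2.
\end{equation*}
Fix such a $k$ and such a $q$, and set $N=\max\{k,q\}$ and $\eta=\tfrac{1}{2}\min\{t-t_1,t_2-t\}$.

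Now apply Lemma~\ref{Spf} to $x$ and to $y$ with this $N$; continuity of $f^i$ on the relevant neighbourhoods for $0\leq i\leq N-1$ yields $\delta>0$ with the property that $|u-x|<\delta$ and $|v-y|<\delta$ force $|f^i(u)-f^i(x)|<\eta/2$ and $|f^i(v)-f^i(y)|<\eta/2$ for every $i<N$. Hence $\bigl||f^i(u)-f^i(v)|-|f^i(x)-f^i(y)|\bigr|<\eta$ for all $i<N$. Consequently
\begin{equation*}
\{i<k:\,|f^i(u)-f^i(v)|<t\}\subseteq\{i<k:\,|f^i(x)-f^i(y)|<t_2\},
\end{equation*}
and symmetrically $\{i<q:\,|f^i(x)-f^i(y)|<t_1\}\subseteq\{i<q:\,|f^i(u)-f^i(v)|<t\}$. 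Dividing by $k$ and $q$ respectively and chaining with the bounds from the previous paragraph delivers the two claimed inequalities; since $k,q$ could be chosen arbitrarily large, arbitrary largeness is preserved.

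For case b) the argument is the same except that $v=c$ is fixed, so no neighbourhood is required around $c$ and Lemma~\ref{Spf} is invoked only at $x$; the estimate $\bigl||f^i(u)-f^i(c)|-|f^i(x)-f^i(c)|\bigr|<\eta/2$ follows solely from $|f^i(u)-f^i(x)|<\eta/2$, and the rest goes through verbatim. The only delicate point, and thus the main obstacle, is making sure the two perturbations $t\mapsto t_i$ (needed to turn the monotonicity-based set inclusions into strict inequalities on $F$ and $F^\ast$) and $u\mapsto x$, $v\mapsto y$ (controlled by Lemma~\ref{Spf}) are chosen in the correct order: the smoothness neighbourhood $\delta$ must be selected \emph{after} $k$, $q$ and $t_1,t_2$ are fixed, since $\delta$ shrinks with $N$ through the finite set $C_N(f)$.
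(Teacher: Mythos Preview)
Your proof is correct and follows essentially the same approach as the paper's: shift $t$ to nearby values $t_1<t<t_2$ using continuity of $F_{xy},F^\ast_{xy}$ at $t$, pick $k,q$ from the definitions of $\liminf/\limsup$, and then use Lemma~\ref{Spf} to make the first $N$ iterates of $u,v$ track those of $x,y$ closely enough that the counting sets in $\xi$ are nested. Your write-up is in fact more explicit than the paper's (which attributes the choice of $\varepsilon_1$ to Lemma~\ref{Spf} rather than to continuity at $t$, and leaves the order of choices implicit); the remark that $\delta$ must be chosen after $k,q,t_1,t_2$ is exactly the point.
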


\begin{proof}
Case a) For $x,y\in C(f)^c$ and for given $\varepsilon$ let's choose $\varepsilon_1$ such that $F_{xy}(t+\varepsilon_1)<F_{xy}(t)+\varepsilon/2$ and $F^{\ast}_{xy}(t-\varepsilon_1)>F_{xy}(t)-\varepsilon/2$, existence of $\varepsilon_1$ follows from Lemma~\ref{Spf}. Let's observe that for $\varepsilon$ there is a positive integer $k$ such that $ \frac{1}{k} \xi(x,y,k,t+\varepsilon_1)< F_{xy}(t+\varepsilon_1)+\varepsilon/2$. The first inequality follows from the fact that if $\delta>0$ is sufficiently small, then $\xi(u,v,k,t)< \xi(x,y,k,t+\varepsilon_1)$ whenever $|x-u|<\delta$ and $|y-v|< \delta$. The proof of the second inequality is similar.\\
Proof of the Case b) is analogous.
\end{proof}

\begin{lemma}\label{V52}
Let $f\in \M(I)$, $c\in C_0(f)$ and let $\omega_1$ and $\omega_2$ be maximal basic sets.
Assume that there are $f^{\ast}$-periodic sets $U$, $V$ and countable sets $Q\subset I^2$ and $P\subset I$, where $Q$ is the set of pairs $(u,v)$ such that $u\in \omega_1 \cap \inte(U)\cap C(f)^c$ and $v\in \omega_2 \cap \inte(V)\cap C(f)^c$, and $P$ is the set of points $p\in \omega_1 \cap \inte(U)\cap C(f)^c$, and, furthermore, that $f|_{\omega_1}$ is strongly transitive in $\inte(U)$ and $f|_{\omega_2}$ is strongly transitive in $\inte(V)$.\\
Case a):Then there are points $x\in \omega_1\cap U$ and $y\in \omega_2\cap V$ such that for any $t>0:$
\begin{equation}\label{2d5.2/1}
F_{xy}(t)\leq \inf\{F_{uv}(t);(u,v)\in Q\}
\end{equation}
and
\begin{equation}\label{2d5.2/2}
F_{xy}^\ast(t)\geq \sup \{F_{uv}^\ast(t);(u,v)\in Q\}.
\end{equation}
Case b): Then there is a point $x\in \omega_1\cap U$ such that for any $t>0$
\begin{equation}\label{1d5.2/1}
F_{xc}(t)\leq \inf\{F_{uc}(t);u\in P\}
\end{equation}
and
\begin{equation}\label{1d5.2/2}
F_{xc}^\ast(t)\geq \sup \{F_{xc}^\ast(t);u\in P\}.
\end{equation}
\end{lemma}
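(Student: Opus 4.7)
I adapt the diagonal construction of Schweizer-Sm\'\i tal \cite{ScSm1} to the present setting; I describe case a) in detail and remark on case b) at the end. First I enumerate $Q=\{(u_n,v_n)\}_{n\geq 1}$. Since each $F_{u_nv_n}$ and $F^*_{u_nv_n}$ is monotone, its discontinuity set is countable, so I can pick for every $n$ a countable dense set $T_n\subset(0,\infty)$ of common continuity points. Interleaving the countable collection of triples $(u_n,v_n,t)$ with $t\in T_n$ together with a null sequence $\varepsilon_i\downarrow 0$ produces a master sequence of ``instructions'' $((u'_i,v'_i,t'_i,\varepsilon_i))_{i\geq 1}$ in which every such triple reappears with arbitrarily small tolerance.

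Write $m_1,m_2$ for the periods of $U,V$ and $m=\mathrm{lcm}(m_1,m_2)$. Inductively I build nested compact intervals $A_1\supset A_2\supset\dots\subset\inte(U)$ with $A_i\cap\omega_1\neq\emptyset$, intervals $B_1\supset B_2\supset\dots\subset\inte(V)$ with $B_i\cap\omega_2\neq\emptyset$, synchronising times $T_i\in m\N$, and window lengths $k_i,q_i$ arbitrarily large satisfying $T_i/k_i\to 0$, $T_i+k_i<T_{i+1}$. At stage $i$, Lemma~\ref{V51}~a) applied at $(u'_i,v'_i)$, continuity point $t'_i$, tolerance $\varepsilon_i$ furnishes $\delta_i>0$ and such $k_i,q_i$. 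Lemma~\ref{PreStTr}, applied in $U$ (source $A_{i-1}$, target $B(u'_i,\delta_i)\subset\inte(U)$) and in $V$ (source $B_{i-1}$, target $B(v'_i,\delta_i)\subset\inte(V)$), yields a common $T_i=bm$ with $b$ large enough for both inclusions $f^{T_i}(A_{i-1})\supset B(u'_i,\delta_i)$ and $f^{T_i}(B_{i-1})\supset B(v'_i,\delta_i)$. Lemma~\ref{Spf} lets me shrink $A_{i-1},B_{i-1}$ further so that $f^{T_i}$ is continuous and strictly monotone on each; $A_i,B_i$ are then the monotone preimages of $B(u'_i,\delta_i),B(v'_i,\delta_i)$. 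Perfectness of $\omega_1$ (Lemma~\ref{PerfB}) gives points of $\omega_1$ arbitrarily close to $u'_i$; pulling one back through the monotone homeomorphism $f^{T_i}|_{A_i}$ preserves $A_i\cap\omega_1\neq\emptyset$, and likewise for $B_i\cap\omega_2$.

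Pick $x\in\bigcap_iA_i$ and $y\in\bigcap_iB_i$. Since $\mathrm{diam}\,A_i\to 0$ and $A_i\cap\omega_1\neq\emptyset$ while $\omega_1$ is closed, $x\in\omega_1\cap U$; similarly $y\in\omega_2\cap V$. For fixed $(u_n,v_n)\in Q$ and $t\in T_n$, this triple occurs as $(u'_i,v'_i,t'_i)$ for infinitely many $i$, and for each such $i$,
$$
\xi(x,y,T_i+k_i,t)\leq T_i+\xi\bigl(f^{T_i}(x),f^{T_i}(y),k_i,t\bigr)<T_i+k_i\bigl(F_{u_nv_n}(t)+\varepsilon_i\bigr),
$$
so dividing by $T_i+k_i$ and letting $i\to\infty$ gives $F_{xy}(t)\leq F_{u_nv_n}(t)$. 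Density of the $T_n$ together with monotonicity of the distributional functions extends this to every $t>0$, proving (\ref{2d5.2/1}). The opposite inequality (\ref{2d5.2/2}) is symmetric, replacing $k_i$ by $q_i$ and using the lower bound of Lemma~\ref{V51}~a). Case b) follows the same template with $B_i$ suppressed and the target shifted: by shift invariance $F_{u_nc}=F_{f^{T_i}(u_n),f^{T_i}(c)}$, so Lemma~\ref{V51}~b) applied at the shifted pair $(f^{T_i}(u'_i),f^{T_i}(c))$ bounds $\xi(f^{T_i}(x),f^{T_i}(c),k_i,t)=\xi(x,c,T_i+k_i,t)-\xi(x,c,T_i,t)$, and the rest is verbatim.

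The main obstacle I anticipate is the synchronisation step in case a): each invocation of Lemma~\ref{PreStTr} only produces valid return times in an arithmetic progression modulo the period of the target $f^*$-periodic set, so a common $T_i$ must lie in the intersection of two such progressions. Since both $u'_i\in\inte(U)$ and $v'_i\in\inte(V)$, any valid $T_i$ must be a multiple of $m_1$ respectively $m_2$; hence restricting $T_i$ to multiples of $\mathrm{lcm}(m_1,m_2)$ is compatible with both, and the ``for all sufficiently large $b$'' clause of Lemma~\ref{PreStTr} provides enough slack to simultaneously arrange $T_{i+1}>T_i+k_i$ and $T_i/k_i\to 0$.
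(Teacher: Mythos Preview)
Your argument follows the same Schweizer--Sm\'\i tal diagonal scheme as the paper, and the overall architecture is sound. Two points deserve attention, though, because the paper resolves them differently.

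First, your claim that ``pulling one back through the monotone homeomorphism $f^{T_i}|_{A_i}$ preserves $A_i\cap\omega_1\neq\emptyset$'' is not justified by perfectness alone: preimages of points of an $\omega$-limit set need not lie in that $\omega$-limit set. The claim \emph{is} true here, but only because of the structural description of $\omega_1$ coming from Lemma~\ref{FG} and the proof of Lemma~\ref{MaxBaz}: $\omega_1$ consists exactly of the points whose forward orbit stays in $\bigcup_{J\in\G}J$, and the irreducibility of $\G$ forces every intermediate interval $J_{k_j}$ in the itinerary $A_i\subset J_{k_0}\to\cdots\to J_{k_{T_i}}\ni u'_i$ to lie in $\G$ (a path leaving $\G$ cannot return). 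You should say this explicitly. The paper sidesteps the issue entirely: it first builds $x$ satisfying \eqref{2d5.2/1}--\eqref{2d5.2/2}, then interleaves visits to neighbourhoods of a point $w$ with $\omega_f(w)=\omega_1$ (Lemma~\ref{IL}) to force $\omega_f(x)\subset\omega_1$, and finally replaces $x$ by $f^n(x)\in\omega_1$ via Lemma~\ref{MaxBaz}.

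Second, your bookkeeping with the shifted count $\xi(x,y,T_i+k_i,t)$ requires $T_i/k_i\to 0$, but in the order you wrote (Lemma~\ref{V51} produces $\delta_i,k_i$ first, then Lemma~\ref{PreStTr} produces $T_i$), the size of $T_i$ is not under control relative to $k_i$: in Lemma~\ref{V51} the radius $\delta$ depends on $k$, so a larger $k_i$ forces a smaller $\delta_i$, and one might fear this feeds back into $T_i$. The fix is easy---strong transitivity is monotone in the target, so once $f^{T_i}(A_{i-1})$ covers a fixed interval around $u'_i$ it covers every smaller ball, hence $T_i$ can be bounded using only $A_{i-1}$, \emph{then} $k_i$ chosen larger than $iT_i$---but the order should be reversed. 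The paper avoids this altogether by applying Lemma~\ref{V51} to the preimages $a,b$ themselves (using $F_{ab}=F_{u_{n+1}v_{n+1}}$), so that \eqref{2d5.2/3} holds for the unshifted count $\xi(u,v,k_j,t_j)$ and no ratio condition is needed.
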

\begin{proof}
Fist we proove Case a)\\
Let $T$ be a countable set, dense in I, and such that, for any $(u,v)\in Q$ and any $t\in T$, both $F_{uv}$ and $F_{uv}^\ast$ are continuous at t. Let $\{t_j\}_{j=1}^\infty$ and $\{u_j,v_j\}_{j=1}^\infty$ be a sequences of points from $T$ and $Q$, respectively, such that for any $t\in T$ and any pair $(u,v)\in Q$, $t=t_j$, $u=u_j$ and $v=v_j$ for infinitely many $j$.

Next, using induction, we define positive integers 
$$k_1<q_1<k_2<q_2\dots <k_i<q_i<\dots$$
and decreasing sequences $\{U_i\}_{i=1}^\infty$ and $\{V_i\}_{i=1}^\infty$ of compact intervals with 
$$\lim_{i\rightarrow \infty} \text{diam} (U_i)=\lim_{i\rightarrow \infty}\text{diam}(V_i)=0,$$
and such that for any $u\in U_n$ and $v\in V_n$ and any $j\leq n$ we get,
\begin{equation} \label{2d5.2/3}
\frac{1}{k_j}\xi(u,v,k_j,t_j)\leq F_{u_j,v_j}(t_j)+\frac{1}{j}
\end{equation} 
and
\begin{equation}\label{2d5.2/4}
\frac{1}{q_j}\xi(u,v,q_j,t_j)\geq F_{u_j,v_j}^\ast(t_j)-\frac{1}{j}.
\end{equation}
Take $U_1=U$, $V_1=V$, $k_1=1$, $q_1=2$ and assume that $U_n$, $V_n$, $k_n$ and $q_n$ have been defined such that $f^j(U_n)\cap \omega_1$ and $f^j(V_n)\cap \omega_2$ are infinite whenever $j$ is sufficiently large. Since $\left.f\right|_{\omega_1}$ is strongly transitive on $U$ and $\left.f\right|_{\omega_2}$ is strongly transitive on $V$, there is $s>q_n$ such that $u_{n+1}\in f^s(U_n)$ and $v_{n+1}\in f^s(V_n)$. Let $a\in U_n$ and $b\in V_n$ be such that $f^s(a)=u_{n+1}$ and $f^s(b)=v_{n+1}$. Then it is easy to see that $F_{ab}=F_{u_{n+1}v_{n+1}}$ and $F_{ab}^\ast=F_{u_{n+1}v_{n+1}}^\ast$. And the existence of $U_{n+1}\subset U_n$, $V_{n+1}\subset V_n$, $k_{n+1}>q_n$ and $q_{n+1}>k_{n+1}$ follows from Lemma~\ref{V51}. Note that $a,\,b,\,U_{n+1},\,V_{n+1}$ can be chosen such that $f^s(U_{n+1})\cap \omega_1$ resp. $f^s(V_{n+1})\cap \omega_2$ are infinite, see Lemma~\ref{PerfB}.

Take $x\in\bigcap_{j=1}^\infty U_j$ and $y\in\bigcap_{j=1}^\infty V_j$. Then for any $t\in T$ and any $(u,v)\in Q$ take $j$ such that $t=t_j$, $u=u_j$ and $v=v_j$. Since $x\in U_j$ and $y\in V_j$,  (\ref{2d5.2/3}) holds with $u=x$ and $v=y$. From the fact that $j$ can be arbitrarily large we have $F_{xy}(t)\leq F_{uv}(t)$. Therefore (\ref{2d5.2/1}) is true for any $t\in T$, and since $T$ is dense in $I$, for any $t$. The argument for (\ref{2d5.2/2}) is analogous.

It remains to show that $x$ can be chosen in $\omega_1$ and $y$ in $\omega_2$. Let $w\in \omega_1 \cap U$ be such that $\omega_f(w)=\omega_1$, and let $\{W_i\}_{i=1}^\infty$ be a decreasing sequence of compact neighborhoods of $w$ with $\lim_{i\rightarrow \infty}W_i=w$. Now apply Lemma~\ref{IL} we obtain that $\omega_f(x)\subset\omega_1$ and similarly $\omega_f(y)\subset \omega_2$. By Lemma~\ref{MaxBaz} there is $n\in \mathbb{N}$ such that $f^n(x)\in \omega_1$ and $f^n(y)\in \omega_2$. It is easy to see that $n$ can be chosen such that $f^n(x)\in U$ and $f^n(y)\in V$ and it is straightforward that (\ref{2d5.2/1}) and (\ref{2d5.2/2}) remain valid.

Proof of the Case b) is analogous to the previous proof, with exception that  for any $j$ $v_j=v=y=c$. 
\end{proof}

\begin{theorem}\label{BF}
Let $f \in \M(I)$ and let $\Omega=\{\omega_i\}_{i=1}^M$ be the set of maximal basic $\omega$-limit sets. For any $i,j$ from $\{1,\dots, M  \}$, set $G_{ij}=\inf \{F_{uv}| u\in \omega_i \mbox{ and } v\in \omega_j \}$. Then\\
$1)$ Each $G_{ij}$ is zero  on an interval $[0, \epsilon(i,j)]$, where $\epsilon(i,j)$ is a positive number.\\
$2)$ The set $\{G_{ij}| \omega_i \cap \omega_j \neq \emptyset  \}$ has a finite number of minimal elements.\\
\end{theorem}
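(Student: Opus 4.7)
The proof naturally splits along the two parts, and (2) is essentially a counting remark, so I would focus on (1). The plan is, for each index pair $(i,j)$, to exhibit one explicit pair $(u,v)\in\omega_i\times\omega_j$ for which the lower distributional function $F_{uv}$ already vanishes on a nondegenerate initial interval; since $G_{ij}(t)\leq F_{uv}(t)$ pointwise, the same $\epsilon(i,j)$ then works for $G_{ij}$. The cheap way to arrange $F_{uv}\equiv 0$ near $0$ is to pick $u$ and $v$ both periodic with disjoint orbits: the sequence $\{\delta_{uv}(k)\}_{k\geq 0}$ is then immediately periodic and takes only finitely many strictly positive values, so $\epsilon(i,j):=\min_k\delta_{uv}(k)>0$, and $\xi(u,v,t,n)=0$ for every $n$ whenever $t\leq\epsilon(i,j)$, which gives $F_{uv}(t)=0$ on $[0,\epsilon(i,j)]$.

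To produce the required disjoint periodic pair I invoke Lemmas~\ref{PerP}, \ref{PerfB} and \ref{BpG}. By Lemmas~\ref{PerP} and \ref{PerfB}, each $\omega_i$ is an infinite perfect set in which periodic points are dense; since a dense subset of an infinite Hausdorff set is infinite and each periodic orbit is a finite set, $\omega_i$ carries infinitely many distinct periodic orbits. If $i\neq j$, Lemma~\ref{BpG} (together with the observation following its proof) forces $\omega_i\cap\omega_j$ to be finite, so at most finitely many periodic orbits of $\omega_i$ can touch $\omega_j$; I pick $u$ from any of the remaining orbits, and similarly a periodic $v\in\omega_j\setminus\omega_i$, which makes $\text{Orb}(u)\cap\text{Orb}(v)=\emptyset$ automatic. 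If $i=j$, I simply pick two different periodic orbits inside $\omega_i$.

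For (2), Lemma~\ref{BpG} provides an injection from $\Omega$ into $\J$ (each $\omega\in\Omega$ must meet the interior of some $J_p$, because $\omega$ is perfect and infinite while the union of boundaries of the $J_i$'s is finite), so $M\leq\#\J<\infty$. Consequently $\{(i,j):\omega_i\cap\omega_j\neq\emptyset\}$ is a finite index set, the collection $\{G_{ij}\}$ is finite, and hence has only finitely many minimal elements with respect to pointwise order. The only step calling for any real care is the existence of two disjoint periodic orbits in a single basic set; the rest is bookkeeping around the lemmas already proved.
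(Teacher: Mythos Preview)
Your argument for part~(1) is essentially the paper's: the paper also exhibits periodic points $p\in\omega_i$, $q\in\omega_j$ with $\min_k\delta_{pq}(k)=\epsilon>0$ and concludes $G_{ij}\le F_{pq}\equiv 0$ on $[0,\epsilon]$. You supply more justification (infinitely many periodic orbits via Lemmas~\ref{PerP} and~\ref{PerfB}, finiteness of $\omega_i\cap\omega_j$ via Lemma~\ref{BpG}) for why a pair with disjoint orbits exists, whereas the paper asserts this in one line.

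For part~(2) the two proofs diverge substantially. You read the statement literally: $M\le\#\J<\infty$, so the family $\{G_{ij}:\omega_i\cap\omega_j\ne\emptyset\}$ is finite and hence has finitely many minimal elements. That is correct as written, but it is not what the paper actually proves or subsequently uses. The paper's proof of~(2) establishes the stronger fact that for each pair $(i,j)$ the collection $\{F_{uv}:u\in\omega_i,\ v\in\omega_j\}$ of \emph{actual} lower distributional functions has only finitely many minimal elements; indeed the Corollary immediately following (``the set $G_{ii}$ has a finite number of minimal elements'') and the proof of the Main Theorem both treat $G_{ij}$ as this set rather than as a single infimum function. To obtain this, the paper runs a three-case analysis according to whether $u,v$ lie in $C(f)$ or $C(f)^c$, invoking Corollary~\ref{cor} and Lemma~\ref{PreStTr} to get strong transitivity and then Lemma~\ref{V52} to manufacture a single pair $(x,y)$ whose $F_{xy}$ simultaneously undercuts a countable family of $F_{uv}$'s. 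Your counting argument does not deliver this: the pointwise infimum $G_{ij}$ need not be realised by any pair $(u,v)$, so knowing that there are finitely many $G_{ij}$'s says nothing about the minimal elements of $D(f)$, which is what the spectrum $\Sigma(f)$ consists of.
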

\begin{proof}
$1)$ By Lemma~\ref{PerP} there are periodic points $p,q$ such that $p\in\omega_i$, $q\in\omega_j$ and $\min_i \delta_{pq}(i)=\epsilon>0$ and therefore $F_{pq}(t)=0$ for $t\in [0,\epsilon]$. Let $\epsilon(i,j)=\epsilon$, $G_{ij}\leq F_{pq}$ gives us the result.\\
$2)$ We will solve the problem by dividing it into three possible situations.\\
$a)$ $u\in C(f)\cap \omega_i$, $v\in C(f)\cap \omega_j$ without loss of generality we can assume that $u\in C_0(f)\cap \omega_i$ and $v\in C_0(f)\cap \omega_j$. Since we have a finite number of critical points the set $\{F_{uv} |u\in C_0(f)\cap \omega_i \mbox{ and } v\in C_0(f)\cap \omega_j   \}$ is also finite.\\
$b)$ $u\in C(f)\cap \omega_i$, $v\in C(f)^c \cap \omega_j$, we can assume that $u\in C_0(f)\cap \omega_i$. \\
Since $\omega_j$ is maximal basic set we have $f^{\ast}$-periodic set $U$ with $\omega_j \subset Orb(U)$ by corollary~\ref{cor} and by Lemma~\ref{PreStTr} $f|_{\omega_j}$ is strongly transitive in $U$. 
By Lemma~\ref{V52} for $u\in C(f)\cap \omega_i$ and any $v \in C(f)^c \cap \omega_j$ there is $x\in \omega_j$ with $ F_{ux}\leq F_{uv}$, together with a finite number of critical points we get that the set of minimal elements of the set of distributional functions of such points is finite.\\
$c)$  $u\in C(f)^c \cap \omega_i$, $v\in C(f)^c \cap \omega_j$, both $\omega_i$ and $\omega_j$ are maximal basic $\omega$-limit sets, we can find $f^{\ast}$-periodic sets $U,V$ with $\omega_i \subset Orb(U)$ $\omega_j \subset Orb(V)$, $f|_{\omega_i}$ is strongly transitive in $U$ and $f|_{\omega_j}$ is strongly transitive in $V$. We use Lemma~\ref{V52} and we have points $x\in \omega_i \cap U$ and $y\in \omega_j \cap V$ such that $F_{xy} \leq F_{uv}$. Therefore for one pair of maximal basic omega-limit sets we have minimal distributional function, together with finite number of maximal basic omega-limit set we have finite number of minimal elements. 
\end{proof}

\begin{corollary}
The set $G_{ii}$ has a finite number of minimal elements.
\end{corollary}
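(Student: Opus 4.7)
The plan is to observe that this corollary is the diagonal case $i = j$ of Theorem \ref{BF}(2), and to verify that the three-case decomposition in that theorem's proof carries over unchanged when both points lie in the same maximal basic $\omega$-limit set $\omega_i$.

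Repeating the case split with $\omega_j$ replaced by $\omega_i$: in Case (a), $u, v \in C_0(f) \cap \omega_i$ gives only finitely many $F_{uv}$ because $C_0(f)$ is finite. In Case (b), $u \in C_0(f) \cap \omega_i$ and $v \in C(f)^c \cap \omega_i$, Lemma \ref{V52} Case b) combined with finiteness of $C_0(f) \cap \omega_i$ yields finitely many minimal elements. In Case (c), $u, v \in C(f)^c \cap \omega_i$, Corollary \ref{cor} together with Lemma \ref{PreStTr} supplies a minimal $f^*$-periodic set $U$ in whose interior $f|_{\omega_i}$ is strongly transitive; applying Lemma \ref{V52} Case a) with $\omega_1 = \omega_2 = \omega_i$ and $V = U$ produces a single pair $(x,y)$ whose distributional function minorizes $F_{uv}$ for all $(u,v)$ in this case, giving one minimal element here.

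The only subtlety I expect is ensuring the Lemma \ref{V52} construction yields $x \neq y$ in Case (c), since a diagonal pair gives a degenerate distributional function. This is handled by choosing the initial compact neighborhoods $U_1, V_1 \subset U$ of Lemma \ref{V52}'s inductive construction to be disjoint and each meeting $\omega_i$, which is possible because $\omega_i$ is perfect by Lemma \ref{PerfB} and therefore contains at least two accumulation points in any nontrivial subinterval it meets. The nested interval sequences then shrink to distinct limits $x, y \in \omega_i \cap U$, as required. Collecting the finite contributions from (a), (b), (c) yields the corollary.
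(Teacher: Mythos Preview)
Your proposal is correct and aligns with the paper's approach: the paper states this corollary with no proof at all, the intent being that it is the special case $i=j$ of Theorem~\ref{BF}(2) (the hypothesis $\omega_i\cap\omega_j\neq\emptyset$ is trivially satisfied when $i=j$), and the three-case argument you reproduce is exactly the proof of that theorem read with $\omega_j=\omega_i$.

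Your remark about ensuring $x\neq y$ in Case~(c) is a genuine point that the paper does not make explicit, either here or in the proof of Theorem~\ref{BF} itself. Your fix---starting the inductive construction of Lemma~\ref{V52} from disjoint compact intervals $U_1,V_1\subset U$, each meeting $\omega_i$ in an infinite set (possible by Lemma~\ref{PerfB})---is a clean way to guarantee distinct limits. One could also observe that as soon as the sequence $\{(u_j,v_j)\}$ in the proof of Lemma~\ref{V52} hits a pair with $u_j\neq v_j$, the preimages $a,b$ chosen at that step are distinct, and the subsequent nested intervals $U_{n+1},V_{n+1}$ can be taken disjoint from then on; either way the concern is easily dispatched.
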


\section{Main result}

\begin{theorem}
Let $f \in \M(I)$, then both the Spectrum $\Sigma(f) $ and the Weak Spectrum $\Sigma_w(f)$ are nonempty and finite.
\end{theorem}
\begin{proof}
To prove the first part, let $D=\{F_{uv}| u \mbox{ and } v \mbox{ are isotetic}  \}$ and $E=\{F_{uv}| u,v \in \omega_i, \omega_i \in \Omega \}$. Clearly $E \subset D$, let $F_{uv} \in D$. Both $u \mbox{ and } v $ are isotetic,  therefore there is some maximal basic omega-limit set $\omega \in \Omega$ such that $\omega_{f^n}(u)\subset \omega$ and $\omega_{f^n}(v)\subset \omega$ for every positive integer $n$. From this we immediately get $\omega(u)\subset \omega$ and $\omega(v)\subset \omega$. We apply Lemma~\ref{MaxBaz} and we find $M\in \N$ such that $f^m (u), f^m(v) \in \omega$ for any $m\geq M$, let $u_1 = f^M(u) \mbox{ and } v_1 = f^M(v)$. From the definition of Distribution function we can see that $F_{uv}= F_{u_1 v_1} $, therefore $F_{uv} \in E$ and $D=E$. Corollary of Theorem~\ref{BF} gives us finite Spectrum $\Sigma(f)$.\\
To prove the second part, let $D_w=\{ F_{uv}| u \in C(f)^c, v \in I \mbox{ such that } \\
 \liminf_{n\to \infty} \delta_{uv}(n)=0  \}$ and $E_w=\{F_{uv}| u\in (\omega_i \cap C(f)^c) , v\in \omega_j, \\ \omega_i\cap \omega_j \neq \emptyset, \mbox{ where } \omega_i, \omega_j \mbox{ are maximal $\omega$-limit sets}  \}$.
Let $F_{uv} \in D_w$, $\omega(u) \subset \omega_i$, since $u\in C(f)^c$, where $\omega_i \in \Omega$. If $v\in C(f)$, then $\omega(v) $ is a cycle. From $ \liminf_{n\to \infty} \delta_{uv}(n)=0$ we get that $\omega(v) \cap \omega_i \neq \emptyset$, see Lemma~\ref{FG}, and therefore $F_{uv} \in  E_w$. If $v\in C(f)^c$, then $\omega(v) \subset \omega_j$, where $\omega_j \in \Omega$. If $\omega_i \neq \omega_j$, then, since $ \liminf_{n\to \infty} \delta_{uv}(n)=0$, there is a critical point $c$ in the intersection of $\omega_i$ and $\omega_j$, see Lemma~\ref{FG}, and $F_{uv}\in E_w$. Therefore $D_w \subset E_w$.
Conversely, let $F_{uv}\in E_w$. Let $u\in \omega_i$, $v\in \omega_j$ and $w\in \omega_i \cap \omega_j$.\\
Case $1)$. Let $u \in C(f)^c$ and $v\in C_0(f)$. Let fix $v$ and let $u\in \omega_i\in \Omega$, we apply Lemma~\ref{V52} and we find $x \in \omega_i$ such that $F_{xv}\leq F_{xv}$  and $F^*_{xv}=\chi_{(0,\infty)}$. Therefore $F_{xv}\in D_w$.\\
Case $2)$. If both $u \mbox{ and } v $ are in $C(f)^c$, then we can take $Q=\{(u,v),(w,w) \}$ and apply Lemma~\ref{V52} to get $x,y$ such that $F_{xy} \leq F_{uv}$ and $F_{xy}^{*}=\chi_{(0,\infty)}$. Hence $\liminf_{n\to \infty} \delta_{xy}(n)=0$ . Which implies $F_{xy} \in D_w$. \\
Thus we proved $D_w\subset E_w$ and if at least one of $u,v$ is not in $C_0(f)$, then $E_w$ has lower bound in $D_w$, if not (i.e. both $u,v$ are in $C_0(f)$), then we have only a finitely many $F_{uv}$. This argument together with Theorem~\ref{BF} gives us a finite Weak Spectrum $\Sigma_w(f)$.
\end{proof}

\section{Case with infinite spectrum}

In this section we present example of piecewise monotonic map without generator and with infinite spectrum. \\

Let $f:I \to I$, where $I=[0,1]$. $f$ with Markov condition, but without generator.\\
Let $J_1=[0,1/3],J_2=(1/3,2/3], J_3=(2/3,1]$, let

$   f(x) = \left\{
     \begin{array}{lr}
       f_1(x)=\frac{2}{3} - x &  x\in J_1 \\
       f_2(x)= \frac{4}{3} - x & x\in J_2 \\
       f_3(x)= x-\frac{2}{3} &  x\in J_3 \\
     \end{array}
   \right.$\\
     
 \begin{figure}[h]
 \centering
\begin{tikzpicture}

\draw[->, thick] (-0.3, 0.0) -- (3.6, 0.0) node[below, pos=3.05/3.6]{1} node[below, pos=1/3]{1/3} node[below, pos=17/27]{2/3} ;
\draw[->, thick] (0.0, -0.3) -- (0.0, 3.6) node[left, pos=0]{0} node[left, pos=3.05/3.6]{1} node[left, pos=1/3]{1/3} node[left, pos=17/27]{2/3} ;
\draw[] (0.0, 1.0) -- (3.0, 1.0) ;
\draw[] (0.0, 2.0) -- (3.0, 2.0);
\draw[] (0.0, 3.0) -- (3.0, 3.0);
\draw[] (1.0, 0.0) -- (1.0, 3.0);
\draw[] (2.0, 0.0) -- (2.0, 3.0);
\draw[] (3.0, 0.0) -- (3.0, 3.0);
\draw[ultra thick] (0.0, 2.0) -- (1.0, 1.0);
\draw[ultra thick] (1.0, 3.0) -- (2.0, 2.0);
\draw[ultra thick] (2.0, 0.0) -- (3.0, 1.0);

\end{tikzpicture}
\caption{$f(x)$}
\end{figure}

We can see that for any point $ x$ in $\inte(J_i)$ holds $f^3(x)=x  $.
   
Let $n$ is a positive integer, let $x_n=\frac{4n-1}{18n}$, $y_n = f_1(x_n)= \frac{8n+1}{18n}$, $z_n=f_2(y_n)=\frac{16n-1}{18n}$.\\
Let additionally $a_n=y_n - x_n = \frac{2n+1}{9n}$ \\
and $b_n = z_n - y_n= \frac{4n-1}{9n}$. \\
We can see that for any $n>1$ $a_n < b_n$ .\\

Distributional function of points $x_n$ and $y_n$ is\\   
    $F_{x_n,y_n}(t)=\left\{
     \begin{array}{lr}
        0 &  0\leq t < a_n \\
       1/3  & a_n \leq t < b_n \\
       2/3 &  b_n \leq t < a_n + b_n \\
       1 & a_n + b_n \leq t
     \end{array}
   \right. $

Since $a_n$ is decreasing sequence and $b_n$ is increasing we can see that for any $n<m$: $a_m<a_n$ and $b_n<b_m$,   
$F_{x_n,y_n}(t)<F_{x_m,y_m}(t) \mbox{ for } t\in (a_m,a_n)$ and $F_{x_m,y_m}(t)<F_{x_n,y_n}(t) \mbox{ for } t\in (b_n,b_m)$ and therefore incomparable. In the figure 2 we can see situation for $F_{x_2,y_2}$ and $F_{x_{2000},y_{2000}}$

\begin{figure}[h]
\centering
\begin{tikzpicture}

\draw[->, thick] (-0.3, 0.0) -- (3.6, 0.0)node[below, pos=3.05/3.6]{1} ;
\draw[->, thick] (0.0, -0.3) -- (0.0, 3.6)node[left, pos=0]{0} node[left, pos=3.05/3.6]{1};
\draw[thick] (0.0,0.0) -- (15/18, 0.0);
\draw[thick] (15/18, 1.0) -- (21/18, 1.0);
\draw[thick] (21/18, 2.0) -- (2.0, 2.0);
\draw[thick] (2.0, 3.0) -- (3.6, 3.0)node[below, pos=5/8]{$F_{x_2,y_2}$};
\draw[dashed] (15/18, 0.0) -- (15/18, 1.0);
\draw[dashed] (21/18, 0.0) -- (21/18, 2.0);
\draw[dashed] (2.0, 0.0) -- (2.0, 3.0) ;

\end{tikzpicture}
\begin{tikzpicture}

\draw[->, thick] (-0.3, 0.0) -- (3.6, 0.0)node[below, pos=3.05/3.6]{1} ;
\draw[->, thick] (0.0, -0.3) -- (0.0, 3.6)node[left, pos=0]{0} node[left, pos=3.05/3.6]{1};
\draw[thick] (0.0, 0.0) -- (0.6668, 0.0);
\draw[thick] (0.6668, 1.0) -- (1.3332, 1.0);
\draw[thick] (1.3332, 2.0) -- (2.0, 2.0);
\draw[thick] (2.0, 3.0) -- (3.6, 3.0)node[below, pos=5/8]{$F_{x_{2000},y_{2000}}$};
\draw[dashed] (15/18, 0.0) -- (15/18, 1.0);
\draw[dashed] (21/18, 0.0) -- (21/18, 2.0);
\draw[dashed] (2.0, 0.0) -- (2.0, 3.0);
\end{tikzpicture}
\caption{Lower distributional functions $F_{x_2, y_2}$, $F_{x_{2000},y_{2000}}$}
\end{figure}
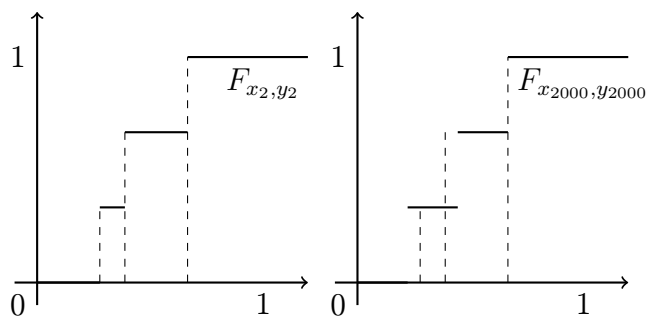

To see that $f(x)$ does not have generator let $K=[a,b] \subset \inte(J_1)$, then $f^{3m}(K)=K$ for any positive integer $m$.

\section*{Acknowledgments}
Research was funded by institutional support for the development of research
organization (I\v{C}47813059) and by SGS~18/2019 .

\end{document}